\newtheorem{thm}{Theorem}
\newtheorem{lm}{Lemma}
\newtheorem{prop}{Proposition}
\renewcommand{\epsilon}{\varepsilon}
\renewcommand{\phi}{\varphi}
\def\Z{\mathbb{Z}}
\def\cA{\EuScript{A}}
\def\Id{\text{\rm Id}}
\begin{document}

\title[Linearization of Nonautonomous Difference Equations]{Smooth Linearization of Nonautonomous Difference Equations with a Nonuniform Dichotomy
}


\maketitle

\author{Davor Dragi\v cevi\'c\,$^a$, \ \ \ Weinian Zhang\,$^b$,\ \ \  Wenmeng Zhang\,$^c$
\\   \\
$^a${\small School of Mathematics and Statistics, University of New South Wales}
\\
\indent
{\small Sydney NSW 2052, Australia, d.dragicevic@unsw.edu.au}
\\
$^b${\small School of Mathematics, Sichuan University}
\\
\indent
{\small Chengdu, Sichuan 610064, China, matzwn@126.com}
\\
$^c${\small School of Mathematical Sciences, Chongqing Normal University}
\\
\indent
{\small Chongqing, 401331, China, wmzhang@cqnu.edu.cn}
}

\begin{abstract}
In this paper we give a smooth linearization theorem for nonautonomous difference equations with a nonuniform
strong exponential dichotomy. The linear part of such a nonautonomous difference equation is defined
by a sequence of invertible linear operators on $\mathbb{R}^d$.
Reducing the linear part to a bounded linear operator on a Banach space,
we discuss the spectrum and its spectral gaps.
Then we obtain a gap condition for $C^1$ linearization of such a nonautonomous difference equation.
.Our theorems improve known results even in the case of uniform strong  exponential dichotomies.
\end{abstract}

{\footnotesize \sc Keywords:}
nonautonomous difference equation; nonuniform strong exponential dichotomy; smooth linearization; spectral gap.

{\footnotesize \sc MSN (2010):}
37C60;  
37D25.  

\parskip 0.3cm


\section{Introduction}
\label{sec.1}

Linearization is one of the most fundamental and important problems in the theory of dynamical systems.
It answers whether a dynamical system is conjugated to its linear part in the sense of $C^r$ ($r\ge 0$).
Linearization is a powerful tool in the discussion of qualitative properties. One of earliest works
was given by Poincar\'e (\cite{Poin}), who proved that an analytic diffeomorphism can be analytically conjugated to
its linear part near a fixed point if all eigenvalues of the linear part lie inside
the unit circle $S^1$ (or outside $S^1$) and satisfy the nonresonant condition. Later, Siegel (\cite{Sieg}), Brjuno (\cite{Brj})
and Yoccoz (\cite{Yoc}) made contributions to the case of eigenvalues on $S^1$, in which the small divisor problem is involved.
The most well-known result is the Hartman-Grobman Theorem (\cite{Hart}), which says that
$C^1$ diffeomorphisms in $\mathbb{R}^n$ can be $C^0$ linearized near the hyperbolic fixed points. Later this result was generalized to
Banach spaces by  Palis (\cite{Palis}) and Pugh (\cite{Pugh}).

Sometimes $C^0$ linearization is not effective to discuss more details of dynamics, for example, to distinguish a node from a focus and
to straighten invariant manifolds, which requests results on smooth linearization.
In 1950's Sternberg (\cite{Stern57,Stern58}) proved that $C^k$ ($k\ge1$) diffeomorphisms can be $C^r$ linearized
near the hyperbolic fixed points, where the integer $r$ depends on $k$ and the nonresonant condition.
In 1970's Belitskii (\cite{Bel73,Bel78}) gave conditions on $C^k$ linearization for $C^{k,1}$ ($k\ge 1$)
diffeomorphisms, which implies that $C^{1,1}$ diffeomorphisms can be $C^1$ linearized
locally if the eigenvalues $\lambda_1,...,\lambda_n$ satisfy that $|\lambda_i|\cdot|\lambda_j|\ne |\lambda_\iota|$
for all $\iota=1,...,n$ if $|\lambda_i|<1<|\lambda_j|$.
As we know for structural stability,
$C^1$ smoothness of conjugacies is of special significance in distinguishing various dynamical systems.
Hartman (\cite{Hart60}) proved that all $C^{1,1}$ contractions on $\mathbb{R}^n$ admit local $C^{1,\beta}$
linearization with small $\beta>0$.
In the early new millennium, conditions on $C^1$ linearization were obtained in Banach spaces
in \cite[Corollary 1.3.3]{Chap} and \cite{ElB, R-S-JDE04, R-S-JDDE04}.
Recently,
weaker conditions for $C^1$ linearization were obtained in \cite{ZhangZhang11JFA, ZhangZhang14JDE} in $\mathbb{R}^2$
and in \cite{ZZJ} in Banach spaces.
Differentiable depedence on parameters was discussed in \cite{R-S-JDE17}.

The above investigation on diffeomorphisms can be regarded as the problem of linearization for autonomous difference equations.
In this paper we aim to nonautonomous difference equations, a more general case.
A general first order nonautonomous difference equation having a fixed point at the origin $O$
is of the form
$$
x_{m+1}=A_m x_m+ f_m(x_m),
$$
which is defined by a sequence $(A_m)_{m\in \mathbb{Z}}$ of invertible linear operators on $\mathbb{R}^d$ and
a sequence $(f_m)_{m\in \mathbb{Z}}$ of smooth functions $f_m:\mathbb{R}^d\to \mathbb{R}^d$ such that $f_m(0)=0$.
Its linearization means the existence of a sequence $(h_m)_{m\in\Z}$ of homeomorphisms near $O$ on $\mathbb{R}^d$
such that
$$
h_{m+1}\circ (A_m+ f_m)=A_m\circ h_m, \ \ \  \forall m\in \Z.
$$
The first nonautonomous version of the Hartman-Grobman Theorem is due to Palmer~\cite{Palmer}
but for the differential equation $x'=A(t)x$ on $\mathbb R^d$ and under the assumption of
(uniform) exponential dichotomy.
In \cite{BV1} Barreira and Valls discussed H\"older continuous
linearization for nonautonomous difference equations
with nonuniform dichotomy, but nothing deals with the smoothness.

In this paper we give a smooth linearization theorem for nonautonomous difference equations with a nonuniform
strong exponential dichotomy. The linear part of such a nonautonomous difference equation is defined
by a sequence of invertible linear operators on $\mathbb{R}^d$.
As in~\cite{BDV1}, we construct a bounded linear operator on a Banach space to convert
the linear part in the nonautonomous setting to an autonomous one,
so that we can discuss the spectrum and its spectral gaps conveniently.
Then we obtain a gap condition for $C^1$ linearization of such a nonautonomous difference equation.
We finally extend the result to compact operators in the infinite dimensional case.

It is worthy mentioning that, in addition to the nonautonomous form of systems, 
another contribution of this paper is the nonuniform version of dichotomies.
Although there were published some results (\cite{BarVal-DCDS07, WL-JDE08}) with uniform exponential dichotomies,
to the best of our knowledge, all the results of this paper are new even
in the particular case that the sequence $(A_m)_{m\in \Z}$ admits a uniform exponential dichotomy
and satisfies the condition that
$\sup_{m\in \Z} \max \{\lVert A_m\rVert, \lVert A_m^{-1}\rVert \} <\infty$.
The principal motivation for considering the notion of a nonuniform strong exponential dichotomy 
comes from its ubiquity in the context of ergodic theory. 
Indeed,
let $(\Omega, \mathcal F, \mathbb P)$ be a probability space and assume that $\sigma \colon \Omega \to \Omega$ is a measurable and invertible transformation that preserves
$\mathbb P$. We recall that this means that $\mathbb P(A)=\mathbb P(\sigma^{-1}(A))$ for each $A\in \mathcal F$. Finally, suppose that $\mathbb P$ is ergodic, i.e. that
$\mathbb P(A)\in \{0, 1\}$ for every $A\in \mathcal F$ such that  $\sigma^{-1}(A)=A$. Let $GL_d$ denote the space of all regular matrices of order d and consider a measurable map
(the so-called linear cocycle)
$A\colon \Omega \to GL_d$ such that
\[
 \int_{\Omega} \max \{0, \log \lVert A(\omega)\rVert\} \, d\mathbb P(\omega) <\infty \ \text{and} \ \int_{\Omega} \max \{0, \log \lVert A(\omega)^{-1}\rVert\} \, d\mathbb P(\omega) <\infty
.\]
Then, if  all Lyapunov exponent of $A$ with respect to $\mathbb P$ 
(given by the Oseledets multiplicative ergodic theorem~\cite{Osel}) are nonzero, we have that for $\mathbb P$-a.e. 
$\omega \in \Omega$, the sequence $(A_m)_{m\in \Z}$ defined by $A_m=A(\sigma^m(\omega))$, $m\in \Z$, 
admits a strong nonuniform exponential dichotomy (see~\cite{BVbook}). We refer
to~\cite{BP} for a detailed exposition of the theory of the dynamical systems with nonzero Lyapunov exponents, 
which goes back to the pioneering work of Pesin~\cite{Pesin}.


\section{Nonuniform strong  exponential dichotomies}
\label{sec.2}

Let $(A_m)_{m \in \Z}$ be a (two-sided) sequence of invertible  linear operators on $\mathbb R^d$.
It defines the nonautonomous difference equation
\begin{eqnarray}
x_{n+1}=A_nx_n.
\label{NA}
\end{eqnarray}
For each $m,n \in \Z$ we define
\begin{eqnarray}
\cA(m,n)=\begin{cases}
A_{m-1} \cdots A_n & \text{if $m > n$,} \\
\Id & \text{if $m=n$,}\\
A_m^{-1} \cdots A_{n-1}^{-1} & \text{if $m<n$.}
\end{cases}
\label{AAA}
\end{eqnarray}
Then $\cA(m,n)$ is a fundamental solution operator of (\ref{NA}) because for any initial point $x_0$ the solution of (\ref{NA})
can be presented as $x_m=\cA(m,0)x_0$.
We say that the sequence $(A_m)_{m \in \Z}$ has a \emph{ nonuniform strong  exponential dichotomy}
if there exist projections $P_m$, $m \in \Z$,
satisfying
\begin{equation}\label{proj}
\cA(m,n)P_n=P_m\cA(m,n)
\ \ \
\forall m,n \in \Z
\end{equation}
and there exist constants
\begin{equation}\label{*EE}
0<\lambda \le \mu , \quad \epsilon \ge 0 \quad \text{and} \quad D >0
\end{equation}
such that for $m \ge n$
\begin{eqnarray*}
\begin{array}{ll}
\lVert \cA(m,n)P_n \rVert \le De^{-\lambda(m-n)+\epsilon |n|}, &\lVert \cA(m,n)Q_n \rVert \le De^{\mu(m-n) +\epsilon |n|},
\\
\lVert \cA(n,m)Q_m \rVert \le De^{-\lambda (m-n)+\epsilon |m|}, &\lVert \cA(n,m)P_m \rVert \le De^{\mu (m-n)+\epsilon |m|},
\end{array}
\end{eqnarray*}
where $Q_m:=\Id-P_m$ for $m\in \Z$.
Furthermore,
we say that the sequence $(A_m)_{m\in \Z}$ admits a \emph{strong exponential dichotomy
with respect to the sequence of norms}
$(\lVert \cdot \rVert_m)_{m\in \Z}$, each of which is a norm on $\mathbb R^d$,
if there exist projections $P_m$, $m\in \Z$, satisfying~\eqref{proj} and there exist constants as given in~\eqref{*EE}
with $\epsilon=0$
such that
for $m\ge n$ and $x\in \mathbb R^d$
\begin{eqnarray}
\begin{array}{ll}
\lVert \cA(m,n)P_nx\rVert_m \le De^{-\lambda(m-n)}\lVert x\rVert_n,   &\lVert \cA(m,n)Q_n x\rVert_m \le De^{\mu(m-n) }\lVert x\rVert_n,
\\
\lVert \cA(n,m)Q_m x\rVert_n \le De^{-\lambda (m-n)}\lVert x\rVert_m, &\lVert \cA(n,m)P_mx \rVert_n \le De^{\mu (m-n)}\lVert x\rVert_m,
\end{array}
\label{fds}
\end{eqnarray}
where $Q_m:=\Id-P_m$ for $m\in \Z$.
The relationship between those two concepts of dichotomy is given by the following result.

\begin{lm}{\bf (Proposition 2.2 in \cite{BDV})}
\label{prop1}
 Let $(A_m)_{m\in \Z}$ be a sequence of invertible operators on $\mathbb R^d$. The following properties are equivalent:
 \begin{enumerate}
  \item $(A_m)_{m\in \Z}$ admits a  nonuniform strong  exponential dichotomy;
  \item $(A_m)_{m\in \Z}$ admits a strong exponential dichotomy with respect to a sequence of norms
$(\lVert \cdot \rVert_m)_{m\in \Z}$ with the property that there exist $C>0$ and $\epsilon \ge 0$ such that
  \begin{equation}\label{ln1}
   \lVert x\rVert \le \lVert x\rVert_m \le Ce^{\epsilon \lvert m\rvert} \lVert x\rVert
  \end{equation}
  and
  \begin{equation}\label{ln2}
   \frac 1C \lVert x\rVert_m \le \lVert A_m x\rVert_{m+1}\le C\lVert x\rVert_m,
  \end{equation}
for every $x\in \mathbb R^d$ and $m\in \Z$.
 \end{enumerate}
\end{lm}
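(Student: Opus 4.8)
This direction is immediate. Suppose $(A_m)_{m\in\Z}$ admits a strong exponential dichotomy with respect to norms $(\lVert\cdot\rVert_m)_{m\in\Z}$ satisfying \eqref{ln1} and \eqref{fds}. Applying the left inequality of \eqref{ln1} to the image of an operator and the right inequality to its argument converts each of the four estimates of \eqref{fds} into the corresponding estimate in the definition of a nonuniform strong exponential dichotomy; for instance, for $m\ge n$ and $x\in\mathbb R^d$,
\[
\lVert\cA(m,n)P_nx\rVert\le\lVert\cA(m,n)P_nx\rVert_m\le De^{-\lambda(m-n)}\lVert x\rVert_n\le DCe^{-\lambda(m-n)+\epsilon|n|}\lVert x\rVert,
\]
so $\lVert\cA(m,n)P_n\rVert\le DCe^{-\lambda(m-n)+\epsilon|n|}$, and the other three estimates follow in exactly the same way. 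As \eqref{proj} is common to both notions, this yields a nonuniform strong exponential dichotomy with $\lambda$ and $\mu$ unchanged, with $\epsilon$ the constant from \eqref{ln1}, and with $D$ replaced by $DC$. The content of the lemma is the converse.

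\textbf{From (1) to (2).} The idea is the classical device of Lyapunov norms: one trades the nonuniform factor $e^{\epsilon|n|}$ present in the dichotomy estimates for the controlled distortion \eqref{ln1} between $\lVert\cdot\rVert_m$ and $\lVert\cdot\rVert$, after which the estimates become uniform. The point to get right — and the reason a naive one-sided definition does not work — is that on each of the subspaces $\Ima P_m$ and $\Ima Q_m$ one must control \emph{both} time directions simultaneously; a norm built only from the contracting direction satisfies \eqref{ln1} but fails \eqref{ln2} with a constant independent of $m$, precisely because of the $e^{\epsilon|m|}$ appearing in the backward estimates. Accordingly I would set, for $v\in\Ima P_m$ and $w\in\Ima Q_m$,
\[
\lVert v\rVert_m^s=\max\Bigl\{\sup_{k\ge m}\lVert\cA(k,m)v\rVert e^{\lambda(k-m)},\ \sup_{k\le m}\lVert\cA(k,m)v\rVert e^{-\mu(m-k)}\Bigr\},
\]
\[
\lVert w\rVert_m^u=\max\Bigl\{\sup_{k\le m}\lVert\cA(k,m)w\rVert e^{\lambda(m-k)},\ \sup_{k\ge m}\lVert\cA(k,m)w\rVert e^{-\mu(k-m)}\Bigr\},
\]
and $\lVert x\rVert_m=\lVert P_mx\rVert_m^s+\lVert Q_mx\rVert_m^u$. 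Being suprema of scaled copies of $\lVert\cdot\rVert$ that dominate $\lVert\cdot\rVert$, these are genuine norms, and they are finite by the dichotomy estimates.

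The verification then falls into three steps. \emph{(i) Estimate \eqref{ln1}.} Taking $k=m$ gives $\lVert x\rVert_m\ge\lVert P_mx\rVert+\lVert Q_mx\rVert\ge\lVert x\rVert$; conversely, writing $\cA(k,m)P_mx=(\cA(k,m)P_m)x$ (and similarly with $Q_m$) and using the dichotomy estimates bounds each of $\lVert P_mx\rVert_m^s$ and $\lVert Q_mx\rVert_m^u$ by $De^{\epsilon|m|}\lVert x\rVert$, whence $\lVert x\rVert_m\le 2De^{\epsilon|m|}\lVert x\rVert$. \emph{(ii) Estimate \eqref{fds}.} One proves the sharper bounds, for $m\ge n$,
\[
\lVert\cA(m,n)P_nx\rVert_m^s\le e^{-\lambda(m-n)}\lVert P_nx\rVert_n^s,\qquad \lVert\cA(m,n)Q_nx\rVert_m^u\le e^{\mu(m-n)}\lVert Q_nx\rVert_n^u,
\]
together with their two time-reversed analogues; since $\lVert P_nx\rVert_n^s$ and $\lVert Q_nx\rVert_n^u$ are at most $\lVert x\rVert_n$, these imply \eqref{fds}. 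Each is obtained from the cocycle identity $\cA(k,m)\cA(m,n)=\cA(k,n)$ by splitting the two suprema that define the left-hand norm at the index $n$ and absorbing the finitely many ``crossing'' terms $n\le k\le m$ with the help of $\mu\ge\lambda>0$. \emph{(iii) Estimate \eqref{ln2}.} Since $A_m$ intertwines the projections, $\lVert A_mx\rVert_{m+1}=\lVert\cA(m+1,m)P_mx\rVert_{m+1}^s+\lVert\cA(m+1,m)Q_mx\rVert_{m+1}^u$; the case $n=m$ of step (ii) bounds this by $e^{-\lambda}\lVert P_mx\rVert_m^s+e^{\mu}\lVert Q_mx\rVert_m^u\le e^{\mu}\lVert x\rVert_m$, and running the same splitting backwards — writing $P_mx=\cA(m,m+1)\bigl(\cA(m+1,m)P_mx\bigr)$ and likewise for the $Q$-part — gives $\lVert P_mx\rVert_m^s\le e^{\mu}\lVert\cA(m+1,m)P_mx\rVert_{m+1}^s$ and $\lVert Q_mx\rVert_m^u\le\lVert\cA(m+1,m)Q_mx\rVert_{m+1}^u$, so that $\lVert x\rVert_m\le e^{\mu}\lVert A_mx\rVert_{m+1}$. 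Hence \eqref{ln2} holds with $C=e^{\mu}$.

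The main obstacle is the choice of the Lyapunov norms underlying step (ii): one has to realize that each subspace must be equipped with a \emph{two-sided} weight — forward growth damped at rate $e^{\lambda}$ and backward growth damped at rate $e^{\mu}$ on $\Ima P_m$, and symmetrically on $\Ima Q_m$ — so that the nonuniform factor stays confined to \eqref{ln1} and does not leak into \eqref{ln2}. Once the definition is in place, each of the remaining verifications is routine bookkeeping with the cocycle identity and the standing inequalities $\mu\ge\lambda>0$.
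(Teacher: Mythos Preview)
The paper does not supply its own proof of this lemma: it is quoted verbatim as Proposition~2.2 of~\cite{BDV} and used as a black box. Your argument via two-sided Lyapunov norms is the standard construction and is essentially correct; in particular, your insistence that each of $\lVert\cdot\rVert_m^s$ and $\lVert\cdot\rVert_m^u$ incorporate \emph{both} the forward weight $e^{\lambda}$ and the backward weight $e^{\mu}$ is exactly what is needed to get~\eqref{ln2} with a constant independent of $m$. This is, to the best of my knowledge, the same approach taken in the cited reference. One cosmetic point: you obtain $C=2D$ in~\eqref{ln1} and $C=e^{\mu}$ in~\eqref{ln2}, so the single constant $C$ in the statement should be taken as $\max\{2D,e^{\mu}\}$.
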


For a sequence of norms $(\lVert \cdot \rVert_m)_{m\in \Z}$  on $\mathbb R^d$, let
\[
 Y_\infty=\bigg{\{} \mathbf x=(x_n)_{n\in \Z} \subset \mathbb R^d: \sup_{n\in \Z} \lVert x_n\rVert_n <+\infty \bigg{\}},
\]
which is a Banach space equipped with the norm
\[
 \lVert \mathbf x\rVert_\infty=\sup_{n\in \Z} \lVert x_n\rVert_n.
\]
Assume that~\eqref{ln2} holds and define a bounded linear operator $\mathbb A \colon Y_\infty \to Y_\infty$ by
\begin{equation}\label{eq:A}
(\mathbb A \mathbf x)_n=A_{n-1}x_{n-1}, \quad \mathbf x=(x_n)_{n\in \Z} \in Y_\infty, \ n \in \Z.
\end{equation}
One can easily verify that
$\mathbb A$ is invertible. Indeed, using the first  inequality in~\eqref{ln2}, we find that the inverse of
$\mathbb A$ is the operator $\mathbb B$ given by
\[
(\mathbb B \mathbf x)_n=A_n^{-1}x_{n+1}, \quad \mathbf x=(x_n)_{n\in \Z} \in Y_\infty, \ n \in \Z.
\]
We will also need the following result.

\begin{lm}{\bf (Theorem 6.1 in \cite{BDV})}
\label{t1}
 Let $(A_m)_{m\in \Z}$ be a sequence of invertible linear operators on $\mathbb R^d$ and let $(\lVert \cdot \rVert_m)_{m\in \Z}$
be a sequence of norms on $\mathbb R^d$ such that~\eqref{ln2} holds
 with some $C>0$. Then the following statements are equivalent:
\begin{enumerate}
\item[\bf (i)]
$(A_m)_{m\in \Z}$ admits a strong exponential dichotomy with respect to  the
sequence of norms $(\lVert \cdot \rVert_m)_{m\in \Z}$;

\item[\bf (ii)]
the operator $\Id-\mathbb A$ is invertible on $Y_\infty$, where $\mathbb A$ is given by~\eqref{eq:A}.
\end{enumerate}
\end{lm}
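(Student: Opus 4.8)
The plan is to establish the two implications separately. For (i)$\Rightarrow$(ii), assume that $(A_m)_{m\in\Z}$ admits a strong exponential dichotomy with respect to $(\lVert\cdot\rVert_m)_{m\in\Z}$, with constants $\lambda,\mu,D$ and projections $P_m$ as in~\eqref{fds}. I would write down the discrete Green operator $\mathbb G\colon Y_\infty\to Y_\infty$,
\[
(\mathbb G\mathbf y)_n=\sum_{k\le n}\cA(n,k)P_k y_k-\sum_{k>n}\cA(n,k)Q_k y_k,\qquad \mathbf y=(y_n)_{n\in\Z}\in Y_\infty ,
\]
bound each summand by $De^{-\lambda|n-k|}\lVert\mathbf y\rVert_\infty$ using the two decay inequalities of~\eqref{fds}, so that the series converge absolutely and $\lVert\mathbb G\rVert\le 2D/(1-e^{-\lambda})$, and then verify algebraically, using $A_{n-1}\cA(n-1,k)=\cA(n,k)$ and $P_n+Q_n=\Id$, that $(\Id-\mathbb A)\mathbb G=\mathbb G(\Id-\mathbb A)=\Id$; hence $\Id-\mathbb A$ is invertible. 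Injectivity can also be seen directly: any $\mathbf x\in\ker(\Id-\mathbb A)$ is a bounded solution of~\eqref{NA} on $\Z$, so $\lVert P_n x_n\rVert_n=\lVert\cA(n,m)P_m x_m\rVert_n\le De^{-\lambda(n-m)}\lVert\mathbf x\rVert_\infty\to0$ as $m\to-\infty$ and $\lVert Q_n x_n\rVert_n\to0$ as $m\to+\infty$, forcing $\mathbf x=0$.

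For (ii)$\Rightarrow$(i), put $K=\lVert(\Id-\mathbb A)^{-1}\rVert$ and set, for each $n\in\Z$,
\[
S_n=\bigl\{\xi\in\mathbb R^d:\sup_{m\ge n}\lVert\cA(m,n)\xi\rVert_m<\infty\bigr\},\qquad U_n=\bigl\{\xi\in\mathbb R^d:\sup_{m\le n}\lVert\cA(m,n)\xi\rVert_m<\infty\bigr\} .
\]
These are linear subspaces of $\mathbb R^d$ with $\cA(m,n)S_n=S_m$ and $\cA(m,n)U_n=U_m$. First I would prove $S_n\cap U_n=\{0\}$: a vector in the intersection yields a $\lVert\cdot\rVert_m$-bounded solution of~\eqref{NA} on all of $\Z$, i.e.\ an element of $\ker(\Id-\mathbb A)=\{0\}$. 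Next I would prove $S_n+U_n=\mathbb R^d$: for $\xi\in\mathbb R^d$ let $\mathbf x$ be the unique bounded solution of $(\Id-\mathbb A)\mathbf x=\mathbf y$, where $\mathbf y$ equals $\xi$ at index $n$ and $0$ elsewhere; then $x_m=\cA(m,n)x_n$ for $m\ge n$ gives $x_n\in S_n$, while $x_m=\cA(m,n-1)x_{n-1}$ for $m\le n-1$ together with $A_{n-1}x_{n-1}=x_n-\xi$ gives $x_n-\xi\in U_n$, so $\xi=x_n-(x_n-\xi)\in S_n+U_n$. Thus $\mathbb R^d=S_n\oplus U_n$; let $P_n$ be the projection onto $S_n$ along $U_n$. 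The invariance of $S_n$ and $U_n$ gives~\eqref{proj}, and applying $(\Id-\mathbb A)^{-1}$ to the forcings used above yields uniform bounds: $\sup_n\lVert P_n\rVert\le K$ and $\lVert\cA(m,n)\xi\rVert_m\le K\lVert\xi\rVert_n$ for $\xi\in S_n$ and $m\ge n$, together with the two symmetric bounds on $U_n$.

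The main obstacle is upgrading these uniform bounds to the exponential estimates in~\eqref{fds}. Consider $N(T):=\sup_n\,\sup\{\lVert\cA(n+T,n)\xi\rVert_{n+T}:\xi\in S_n,\ \lVert\xi\rVert_n=1\}$, which satisfies $N(0)=1$, $N(T)\le K$ and $N(T_1+T_2)\le N(T_1)N(T_2)$. The key claim is $N(T_0)<1$ for some $T_0$; otherwise, concatenating finitely many near-extremal forward pieces of unit stable vectors, placed far apart along $\Z$, produces a forcing of unit $\lVert\cdot\rVert_\infty$-norm whose bounded response has $\lVert\cdot\rVert_\infty$-norm growing without bound, contradicting $\lVert(\Id-\mathbb A)^{-1}\rVert=K<\infty$. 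From $N(T_0)<1$ and submultiplicativity one gets $\lVert\cA(m,n)P_n\xi\rVert_m\le De^{-\lambda(m-n)}\lVert\xi\rVert_n$ for $m\ge n$ with $\lambda=-\tfrac{\log N(T_0)}{T_0}>0$ and a suitable $D$, and the backward-decay estimate for $Q_n$ follows symmetrically. Finally the two growth estimates follow from~\eqref{ln2}, which gives $\lVert\cA(m,n)\rVert\le C^{|m-n|}$ relative to $(\lVert\cdot\rVert_n,\lVert\cdot\rVert_m)$, combined with $\sup_n\lVert P_n\rVert\le K$; choosing $\mu\ge\max\{\lambda,\log C\}$ then covers all four inequalities of~\eqref{fds}, with no nonuniform factor since we work throughout with the norms $\lVert\cdot\rVert_m$. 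Note that~\eqref{ln2} is what makes $\mathbb A$ a bounded invertible operator, so that (ii) makes sense at all, and it is also used here for the growth bounds.

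There is also a more conceptual route for (ii)$\Rightarrow$(i): the diagonal isometries $D_\theta\mathbf x=(e^{in\theta}x_n)_{n\in\Z}$ of the complexification of $Y_\infty$ satisfy $D_\theta\mathbb A D_\theta^{-1}=e^{i\theta}\mathbb A$, so $\sigma(\mathbb A)$ is rotation invariant; hence $1\notin\sigma(\mathbb A)$ forces the unit circle to miss $\sigma(\mathbb A)$, i.e.\ $\mathbb A$ is hyperbolic on $Y_\infty$. One then checks that the corresponding Riesz projection $\Pi$ acts fibrewise, $(\Pi\mathbf x)_n=P_n x_n$, recovering the $P_n$ above, and the exponential rates come directly from $r(\mathbb A|_{\Ima\Pi})<1$ and $r((\mathbb A|_{\ker\Pi})^{-1})<1$ evaluated on sequences supported at a single index. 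In that route the technical heart is showing $\Pi$ is fibred with uniform control, which is the counterpart of the bootstrap above.
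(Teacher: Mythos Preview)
The paper does not prove this lemma; it is quoted as Theorem~6.1 of~\cite{BDV} and used as a black box, so there is nothing in the paper to compare your argument against.

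Your outline is the standard admissibility proof and is essentially correct. The direction (i)$\Rightarrow$(ii) via the discrete Green operator is complete as written. In (ii)$\Rightarrow$(i), the construction of the splitting $\mathbb R^d=S_n\oplus U_n$ from $(\Id-\mathbb A)^{-1}$, the identification $P_n\xi=x_n$ with $\mathbf x=(\Id-\mathbb A)^{-1}\mathbf y$ for $\mathbf y$ supported at $n$, and the resulting uniform bounds $\sup_n\lVert P_n\rVert\le K$ and $\lVert\cA(m,n)\rvert_{S_n}\rVert\le K$ are all right.

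The one genuine soft spot is the step ``$N(T_0)<1$ for some $T_0$''. Your description---concatenating near-extremal pieces of \emph{different} stable trajectories placed far apart---does not, as stated, force the response to blow up: each piece contributes a bounded response, and separating them in time only keeps the total response bounded. The argument that actually works forces \emph{along a single stable trajectory}: for $\xi\in S_n$ with $\lVert\xi\rVert_n=1$ set $y_k=\cA(k,n)\xi/\lVert\cA(k,n)\xi\rVert_k$ for $n\le k\le n+T$ and $y_k=0$ otherwise; since each $y_k\in S_k$, the unique bounded response at time $n+T$ is $\cA(n+T,n)\xi\cdot\sum_{k=n}^{n+T}\lVert\cA(k,n)\xi\rVert_k^{-1}$, and $\lVert(\Id-\mathbb A)^{-1}\rVert\le K$ gives
\[
\lVert\cA(n+T,n)\xi\rVert_{n+T}\sum_{k=n}^{n+T}\frac{1}{\lVert\cA(k,n)\xi\rVert_k}\le K,
\]
uniformly in $n,\xi,T$. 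This Datko--Coppel type inequality forces exponential decay of $\cA(\cdot,n)\rvert_{S_n}$; the estimate on $U_n$ is symmetric. With this correction your proof goes through. Your alternative route via rotation invariance of $\sigma(\mathbb A)$ is precisely Proposition~\ref{p2x} of the present paper, and the residual work---checking that the Riesz projection acts fibrewise with uniform bounds---is equivalent in difficulty to the bootstrap above.
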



\section{Spectrum of $\mathbb A$}

Assume that $(A_m)_{m\in \Z}$ admits a nonuniform strong exponential dichotomy.
Throughout the following two sections, choose
$(\lVert \cdot \rVert_m)_{m\in \Z}$ to be the
sequence of norms given by Lemma~\ref{prop1}.
Furthermore, consider $\mathbb A$ defined as in~\eqref{eq:A}. Let $\sigma (\mathbb A)$ denote the spectrum of $\mathbb A$, i.e.,
\[
 \sigma (\mathbb A)=\{ a\in \mathbb C: a\Id-\mathbb A \ \text{is not invertible on $Y_\infty$}\}.
\]
The following result is a direct consequence of Lemma~\ref{t1}.

\begin{prop}\label{p2x}
Let $b\in \mathbb C$ such that $\lvert b\rvert=1$ and $a\in \sigma (\mathbb A)$. Then, $ab\in \sigma (\mathbb A)$.
\end{prop}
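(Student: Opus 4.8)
The plan is to produce an invertible isometry of $Y_\infty$ that conjugates $\mathbb A$ to $b^{-1}\mathbb A$, and then to invoke two standard facts about spectra: invariance under conjugation by an invertible operator, and the identity $\sigma(c\,\mathbb A)=c\,\sigma(\mathbb A)$ valid for every nonzero scalar $c$.

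First, for $b\in\mathbb C$ with $\lvert b\rvert=1$, define $M_b\colon Y_\infty\to Y_\infty$ by $(M_b\mathbf x)_n=b^n x_n$ for $\mathbf x=(x_n)_{n\in\Z}$ and $n\in\Z$. Since $\lvert b\rvert=1$ we have $\lVert b^n x_n\rVert_n=\lVert x_n\rVert_n$ for all $n$, so $M_b$ is a well-defined linear isometry of $(Y_\infty,\lVert\cdot\rVert_\infty)$; it is bijective with inverse $M_{b^{-1}}$ (recall $b^{-1}=\bar b$). A direct computation then shows that for all $\mathbf x\in Y_\infty$ and $n\in\Z$,
\[
(M_b^{-1}\mathbb A M_b\mathbf x)_n=b^{-n}A_{n-1}(b^{n-1}x_{n-1})=b^{-1}A_{n-1}x_{n-1}=b^{-1}(\mathbb A\mathbf x)_n,
\]
so that $M_b^{-1}\mathbb A M_b=b^{-1}\mathbb A$. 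Conjugation by the invertible operator $M_b$ preserves the invertibility of $a\Id-\mathbb A$ for each $a\in\mathbb C$, hence $\sigma(b^{-1}\mathbb A)=\sigma(\mathbb A)$. On the other hand, since $b^{-1}\ne0$ and $a\Id-b^{-1}\mathbb A=b^{-1}(ab\,\Id-\mathbb A)$, we get $\sigma(b^{-1}\mathbb A)=b^{-1}\sigma(\mathbb A)$. Comparing the two, $b^{-1}\sigma(\mathbb A)=\sigma(\mathbb A)$, i.e. $\sigma(\mathbb A)=b\,\sigma(\mathbb A)$; therefore $a\in\sigma(\mathbb A)$ forces $ab=b\cdot a\in b\,\sigma(\mathbb A)=\sigma(\mathbb A)$, which is exactly the assertion.

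I do not expect any real obstacle here: the only point needing care is checking that $M_b$ is an isometry for the specific norm $\lVert\cdot\rVert_\infty$ and maps $Y_\infty$ onto itself, which is immediate from $\lvert b\rvert=1$. As an alternative, matching the remark that the proposition is ``a direct consequence'' of Lemma~\ref{t1}, one can argue without $M_b$: since $\mathbb A$ is invertible we have $0\notin\sigma(\mathbb A)$, so $a\ne0$; then $a\Id-\mathbb A$ is invertible if and only if $\Id-a^{-1}\mathbb A$ is, and $a^{-1}\mathbb A$ is precisely the operator of type~\eqref{eq:A} attached to the sequence $(a^{-1}A_m)_{m\in\Z}$, which still satisfies~\eqref{ln2} (with a rescaled constant). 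By Lemma~\ref{t1}, $a\notin\sigma(\mathbb A)$ if and only if $(a^{-1}A_m)_{m\in\Z}$ admits a strong exponential dichotomy with respect to $(\lVert\cdot\rVert_m)_{m\in\Z}$. Since the evolution operator of $(c^{-1}A_m)_{m\in\Z}$ equals $c^{\,n-m}\cA(m,n)$, replacing $c=a$ by $c=ab$ multiplies it by the unimodular factor $b^{\,n-m}$, leaving the dichotomy bounds~\eqref{fds} (with the same projections and constants) intact; hence $a\notin\sigma(\mathbb A)$ if and only if $ab\notin\sigma(\mathbb A)$.
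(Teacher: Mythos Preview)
Your proposal is correct. Your alternative paragraph is essentially the paper's own proof: the paper argues by contradiction that if $ab\Id-\mathbb A$ were invertible, then by Lemma~\ref{t1} the sequence $(\frac{1}{ab}A_m)_{m\in\Z}$ would admit a strong exponential dichotomy with respect to $(\lVert\cdot\rVert_m)_{m\in\Z}$, and since $\lvert b\rvert=1$ the same bounds give a dichotomy for $(\frac{1}{a}A_m)_{m\in\Z}$, whence $a\Id-\mathbb A$ would be invertible---contradiction.

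Your first argument, via the weighted shift $M_b$, is a genuinely different and somewhat more direct route. It avoids Lemma~\ref{t1} and the dichotomy machinery altogether, relying only on the similarity $M_b^{-1}\mathbb A M_b=b^{-1}\mathbb A$ together with the elementary spectral identities $\sigma(M_b^{-1}\mathbb A M_b)=\sigma(\mathbb A)$ and $\sigma(b^{-1}\mathbb A)=b^{-1}\sigma(\mathbb A)$. This is cleaner as a standalone spectral fact. The paper's approach, by contrast, stays inside the dichotomy framework that drives all of Section~3, so it fits more naturally with the surrounding arguments (Proposition~\ref{p3x}, Theorem~\ref{ts}). One small point worth making explicit in your first argument: since $Y_\infty$ is built from sequences in $\mathbb R^d$, the spectrum and the operator $M_b$ must be understood on the complexification, where the extended norms satisfy $\lVert \lambda v\rVert=\lvert\lambda\rvert\,\lVert v\rVert$ for complex $\lambda$; the paper leaves this implicit as well.
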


\begin{proof}
 Assume that $ab\notin \sigma (\mathbb A)$. Hence, $ab\Id-\mathbb A$ is an invertible operator on $Y_\infty$.
Hence, it follows from Theorem~\ref{t1} that the sequence $(\frac{1}{ab}A_m)_{m\in \Z}$
admits a strong exponential dichotomy with respect to norms $\lVert \cdot \rVert_m$, $m\in \Z$. However, since $\lvert b\rvert=1$ this would imply that
$(\frac{1}{a}A_m)_{m\in \Z}$ admits a strong exponential dichotomy with respect to norms $\lVert \cdot \rVert_m$, $m\in \Z$.
Using again Theorem~\ref{t1}, we would obtain that
$a\Id-\mathbb A$ is invertible on $Y_\infty$ which yields a contradiction with our assumption that $a\in \sigma (\mathbb A)$.
\end{proof}

The proof of the following result is inspired by the classical work of Sacker and Sell (\cite{SS}).

\begin{prop}\label{p3x}
 $\sigma (\mathbb A)\cap (0, \infty)$ is a  union of finitely many disjoint closed intervals.
\end{prop}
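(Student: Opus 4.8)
The plan is to read off the structure of the positive part of the resolvent set, $R:=(0,\infty)\setminus\sigma(\mathbb A)$, from the dichotomy theory and then argue by rescaling, in the spirit of Sacker and Sell. For $a>0$ the sequence $(\frac1a A_m)_{m\in\Z}$ has $\frac1a\mathbb A$ as its associated operator on $Y_\infty$ (see~\eqref{eq:A}), and $a\Id-\mathbb A$ is invertible exactly when $\Id-\frac1a\mathbb A$ is; hence, by Lemma~\ref{t1}, $a\in R$ precisely when $(\frac1a A_m)_{m\in\Z}$ admits a strong exponential dichotomy with respect to the norms $(\lVert\cdot\rVert_m)_{m\in\Z}$ fixed via Lemma~\ref{prop1}. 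Since $\mathbb A$ is bounded and invertible, $\sigma(\mathbb A)$ is a compact subset of $\mathbb C\setminus\{0\}$, so $\sigma(\mathbb A)\cap(0,\infty)$ is compact, $R$ is open in $(0,\infty)$, and $R\supseteq(0,\alpha)\cup(\beta,+\infty)$ for suitable $0<\alpha\le\beta$. To each $a\in R$ I would attach the integer $k(a):=\dim\Ima P^a_0\in\{0,1,\dots,d\}$, where $(P^a_m)_{m\in\Z}$ are the dichotomy projections of $(\frac1a A_m)_{m\in\Z}$; since $\Ima P^a_m=\cA(m,0)\Ima P^a_0$ and $\cA(m,0)$ is invertible, $k(a)$ does not depend on $m$. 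Combining the four estimates in~\eqref{fds} gives the intrinsic descriptions $\Ima P^a_0=\{v\in\mathbb R^d:\ \sup_{m\ge0}\lVert a^{-m}\cA(m,0)v\rVert_m<\infty\}$ and $\ker P^a_0=\{v\in\mathbb R^d:\ \sup_{m\le0}\lVert a^{-m}\cA(m,0)v\rVert_m<\infty\}$; in particular, for a given sequence the dichotomy projections are unique.

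The engine of the argument is the elementary observation that if one multiplies any estimate of~\eqref{fds} written for $(\frac1a A_m)$ (which carries $\epsilon=0$) by a factor $(a/c)^{m-n}$ with $m\ge n$, then forward decay rates are preserved and forward growth rates degrade by at most $|\log(c/a)|$ when $a\le c$, and symmetrically for the backward estimates when $c\le a$. From this I would derive three facts. \emph{Monotonicity}: if $a<b$ lie in $R$ then $\Ima P^a_0\subseteq\Ima P^b_0$ by the intrinsic description (a sequence bounded with base $a$ decays with the larger base $b$), whence $k(a)\le k(b)$. \emph{Robustness}: if $a_0\in R$, then for every $a$ with $|\log(a/a_0)|$ smaller than the stable rate of the dichotomy of $(\frac1{a_0}A_m)$, the rescaled estimates show that $(\frac1a A_m)$ admits a dichotomy with the \emph{same} projections $P^{a_0}_m$; by uniqueness $P^a_m=P^{a_0}_m$, so $k$ is locally constant on $R$ and hence constant on each connected component of $R$. \emph{Gap filling}: if $a_1<a_2$ lie in $R$ with $k(a_1)=k(a_2)$, then $\Ima P^{a_1}_0=\Ima P^{a_2}_0$ (an inclusion between subspaces of equal dimension), hence $\Ima P^{a_1}_m=\Ima P^{a_2}_m$ and $\ker P^{a_1}_m=\ker P^{a_2}_m$ for all $m$; for every $c\in[a_1,a_2]$ one then verifies that $(\frac1c A_m)$ admits a dichotomy with projections $P^{a_2}_m$, by rescaling the stable estimates of the $a_1$-dichotomy (legitimate since $c\ge a_1$) and the unstable estimates of the $a_2$-dichotomy (legitimate since $c\le a_2$), so $[a_1,a_2]\subseteq R$.

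With these three facts the conclusion is short. Being open with complement in $(0,\infty)$ bounded away from $0$ and $+\infty$, $R$ is a disjoint union of finitely or countably many open intervals, which I would list in increasing order as $J_0=(0,\alpha_1)$, $J_1=(\beta_1,\alpha_2)$, \dots{}, with $0<\alpha_1\le\beta_1<\alpha_2\le\dots$, the last one being of the form $(\beta_N,+\infty)$ (with $N=0$ if $R=(0,\infty)$). By robustness $k$ is constant on each $J_i$, by monotonicity these constants are nondecreasing in $i$, and by the gap-filling step two distinct components cannot share the same value of $k$, since they would be connected through $R$. As $k$ takes values in $\{0,1,\dots,d\}$, there are at most $d+1$ components, hence at most $d$ gaps. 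Finally, for each $i$ the interval $[\alpha_i,\beta_i]$ lies in $\sigma(\mathbb A)$: its endpoints, being endpoints of maximal open subintervals of the open set $R$, do not lie in $R$, and its interior meets no component of $R$. These intervals are pairwise disjoint since $\beta_i<\alpha_{i+1}$, so $\sigma(\mathbb A)\cap(0,\infty)=\bigcup_{i=1}^{N}[\alpha_i,\beta_i]$ is a union of at most $d$ disjoint closed intervals.

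The hard part will be the gap-filling step: one must check that all four inequalities of~\eqref{fds} hold simultaneously for the composite projections $P^{a_2}_m$ under the two different rescalings, producing a single pair of exponents $0<\lambda'\le\mu'$ and a single constant $D'$. This is precisely where passing to the norm-sequence formulation ($\epsilon=0$) pays off, since the rescaling factors $(a_j/c)^{m-n}$ combine cleanly with estimates that carry no nonuniformity term; in the original nonuniform formulation the factors $e^{\epsilon|n|}$ would be carried along unchanged and the bookkeeping would be messier. A secondary point demanding care is the uniqueness of the dichotomy projections, which underpins both robustness and gap filling, and which I would establish via the intrinsic boundedness characterizations recorded in the first paragraph.
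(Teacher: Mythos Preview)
Your proposal is correct and follows essentially the same Sacker--Sell strategy as the paper: both identify $a\in R=(0,\infty)\setminus\sigma(\mathbb A)$ with the existence of a strong exponential dichotomy for $(\tfrac1a A_m)$, attach to each such $a$ the dimension of the stable subspace (the paper's $S_a(m)$ is exactly your $\Ima P^a_0$), and then prove the same three ingredients---robustness (the paper's Claim~1), the monotone inclusion $S_{a_1}\subset S_{a_2}$ for $a_1<a_2$, and gap filling when the dimensions coincide (the paper's Claim~2)---before concluding by counting the at most $d+1$ possible dimension values. Your final counting of components is slightly more explicit than the paper's terse ``the conclusion follows easily from Claim~2'', but the content is identical.
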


\begin{proof}
 For arbitrary $a>0$ and $n\in \Z$, let
 \[
  S_a(n):=\bigg{\{}x\in \mathbb R^d: \sup_{m\ge n}\frac{1}{a^{m-n}}\lVert \cA(m, n)x\rVert_m <\infty \bigg{\}}.
 \]
If $a\notin \sigma (\mathbb A)$, then $(\frac 1 aA_m)_{m\in \Z}$ admits a strong exponential dichotomy
with respect to the
sequence of norms $(\lVert \cdot \rVert_m)_{m\in \Z}$ and projections
$P_m$, $m\in \Z$.
It is easy to verify that $\mathcal{R}P_m$, the range of $P_m$, satisfies
$\mathcal{R}P_m=S_a(m)$
for
all $m\in \Z$.
Hence, $\dim S_a(m)$ does not depend on $m$ and we can write simply $\dim S_a$.

{\bf Claim 1}
{\it For any $a\in (0, \infty)\setminus \sigma (\mathbb A)$, there exists $\rho >0$ such that for each $b\in (a-\rho, a+\rho)$, we have that
 $b\notin \sigma (\mathbb A)$ and $\dim S_a=\dim S_b$.
}


In fact, since $a\in (0, \infty)\setminus \sigma (\mathbb A)$, the sequence $(\frac 1 aA_m)_{m\in \Z}$
admits a strong exponential dichotomy with respect to the
sequence of norms $(\lVert \cdot \rVert_m)_{m\in \Z}$.
Thus, there exist projections $P_m$, $m\in \Z$, satisfying~\eqref{proj} and constants as given in~\eqref{*EE}
with $\epsilon=0$
such that for $m\ge n$ and $x\in \mathbb R^d$
\[
\frac{1}{a^{m-n}}\lVert \cA(m,n)P_nx\rVert_m \le De^{-\lambda(m-n)}\lVert x\rVert_n,
\]
\[
\frac{1}{a^{m-n}}\lVert \cA(m,n)Q_n x\rVert_m \le De^{\mu(m-n) }\lVert x\rVert_n,
\]
\[
 \frac{1}{a^{n-m}}\lVert \cA(n,m)Q_m x\rVert_n \le De^{-\lambda (m-n)}\lVert x\rVert_m,
\]
and
\[
  \frac{1}{a^{n-m}}\lVert \cA(n,m)P_mx \rVert_n \le De^{\mu (m-n)}\lVert x\rVert_m.
\]
Choose $\rho >0$ such that
\[
 \frac{a}{a-\rho}e^{-\lambda} <1 \quad \text{and} \quad \frac{a+\rho}{a}e^{-\lambda} <1.
\]
Then, for each $b\in (a-\rho, a+\rho)$, we have
\[
\frac{1}{b^{m-n}}\lVert \cA(m,n)P_n x\rVert_m \le D\bigg{(} \frac{a}{a-\rho}e^{-\lambda} \bigg{)}^{m-n} \lVert x\rVert_n,
\]
\[
\frac{1}{b^{m-n}}\lVert \cA(m,n)Q_n x\rVert_m \le D\bigg{(}  \frac{a}{a-\rho}e^{\mu} \bigg{)}^{m-n} \lVert x\rVert_n,
\]
\[
 \frac{1}{b^{n-m}}\lVert \cA(n,m)Q_m x\rVert_n \le D \bigg{(} \frac{a+\rho}{a}e^{-\lambda} \bigg{)}^{m-n}\lVert x\rVert_m,
\]
and
\[
  \frac{1}{a^{n-m}}\lVert \cA(n,m)P_mx \rVert_n \le D \bigg{(} \frac{a+\rho}{a}e^{\mu} \bigg{)}^{m-n}\lVert x\rVert_m,
\]
for $m\ge n$ and $x\in \mathbb R^d$. This implies that $(\frac 1 bA_m)_{m\in \Z}$ admits
a strong exponential dichotomy with respect to the
sequence of norms $(\lVert \cdot \rVert_m)_{m\in \Z}$
and projections $P_m$, $m\in \Z$. Using Lemma~\ref{t1}, we obtain  that $b\notin \sigma (\mathbb A)$. In addition,
\[
 S_b(m)=\mathcal R P_m =S_a(m) \quad \text{for each $m\in \Z$,}
\]
which implies that $\dim S_a=\dim S_b$ and proves the claim.

We note that for $0<a_1<a_2$, we have that $S_{a_1}(m) \subset S_{a_2}(m)$ for each $m\in \Z$.
We further need the following.

{\bf Claim 2}
{\it
Assume that $0<a_1<a_2$ and $a_1, a_2 \notin \sigma (\mathbb A)$. The following two statements are equivalent:
{\bf (i)}
$\dim S_{a_1}=\dim S_{a_2}$;
{\bf (ii)} $[a_1, a_2] \cap \sigma (\mathbb A)=\emptyset$.
}

In fact, assume that $\dim S_{a_1}=\dim S_{a_2}$. Hence, sequences $(\frac{1}{a_i}A_m)_{m\in \Z}$, $i=1, 2$,
admit the same
strong exponential dichotomy with respect to the same
sequence of norms
$(\lVert \cdot \rVert_m)_{m\in \Z}$  and the same projections $P_m$, $m\in \Z$.
Then, there exist constants as given in~\eqref{*EE} with $\epsilon=0$
such that for $m\ge n$, $x\in \mathbb R^d$ and $i=1, 2$
\[
\frac{1}{a_i^{m-n}}\lVert \cA(m,n)P_nx\rVert_m \le De^{-\lambda(m-n)}\lVert x\rVert_n,
\]
\[
\frac{1}{a_i^{m-n}}\lVert \cA(m,n)Q_n x\rVert_m \le De^{\mu(m-n) }\lVert x\rVert_n,
\]
\[
 \frac{1}{a_i^{n-m}}\lVert \cA(n,m)Q_m x\rVert_n \le De^{-\lambda (m-n)}\lVert x\rVert_m,
\]
and
\[
 \frac{1}{a_i^{n-m}}\lVert \cA(n,m)P_mx \rVert_n \le De^{\mu (m-n)}\lVert x\rVert_m.
\]
Then, for $a\in [a_1, a_2]$ we have
\[
  \frac{1}{a^{m-n}}\lVert \cA(m,n)P_n x\rVert_m\le  \frac{1}{a_1^{m-n}}\lVert \cA(m,n)P_n x\rVert_m \le De^{-\lambda(m-n)}\lVert x\rVert_n,
\]
\[
  \frac{1}{a^{m-n}}\lVert \cA(m,n)Q_n x\rVert_m \le   \frac{1}{a_1^{m-n}}\lVert \cA(m,n)Q_n x\rVert_m \le De^{\mu(m-n) }\lVert x\rVert_n,
\]
\[
 \frac{1}{a^{n-m}}\lVert \cA(n,m)Q_m x\rVert_n \le \frac{1}{a_2^{n-m}}\lVert \cA(n,m)Q_m x\rVert_n \le De^{-\lambda (m-n)}\lVert x\rVert_m,
\]
and
\[
 \frac{1}{a^{n-m}}\lVert \cA(n,m)Q_m x\rVert_n \le  \frac{1}{a_2^{n-m}}\lVert \cA(n,m)P_mx \rVert_n \le De^{\mu (m-n)}\lVert x\rVert_m,
\]
for $m\ge n$ and $x\in \mathbb R^d$. We conclude that $(\frac 1 a A_m)_{m\in \Z}$ admits a strong exponential dichotomy
with respect to
the sequence of norm $(\lVert \cdot \rVert_m)_{m\in \Z}$
and thus $a\notin \sigma (\mathbb A)$.
Let us now assume that $[a_1, a_2] \cap \sigma (\mathbb A)=\emptyset$ and suppose that $\dim S_{a_1}< \dim S_{a_2}$.
Let
\[
 c:=\inf \{b\notin \sigma (\mathbb A): \dim S_b=\dim S_{a_2} \}.
\]
By Claim 1,
it is easy to conclude that $c\in \sigma (\mathbb A) \cap (a_1, a_2)$ which yields a contradiction.
This proves the claim.

Then, the conclusion of our Proposition follows easily Claim 2.
\end{proof}

\begin{thm}\label{ts}
Let the linear operator $\mathbb A$
be defined as in (\ref{eq:A}) by a sequence of invertible linear operators $(A_m)_{m\in \Z}$ on $\mathbb R^d$
with a nonuniform strong exponential dichotomy.
Then there exist real constants
 \[
  0<a_1 \le b_1 <a_2 \le b_2 <\ldots <a_k \le b_k <1 < a_{k+1} \le b_{k+1} < \ldots <a_r \le b_r
 \]
such that
\begin{equation}\label{spec-cond}
 \sigma (\mathbb A)=\bigcup_{i=1}^r \{ z\in \mathbb C: a_i \le \lvert z\rvert \le b_i \}.
\end{equation}
\end{thm}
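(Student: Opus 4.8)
The plan is to combine the three preliminary results—Proposition~\ref{p2x}, Proposition~\ref{p3x}, and the basic structural properties of $\sigma(\mathbb A)$—into the desired decomposition. First I would observe that $\mathbb A$ is a bounded invertible operator on the Banach space $Y_\infty$, so by standard spectral theory $\sigma(\mathbb A)$ is a nonempty compact subset of $\mathbb C$ that is bounded away from $0$; explicitly, $\sigma(\mathbb A) \subset \{z : \lVert \mathbb A^{-1}\rVert^{-1} \le \lvert z\rvert \le \lVert \mathbb A\rVert\}$. Next, Proposition~\ref{p2x} shows that $\sigma(\mathbb A)$ is invariant under multiplication by any $b\in\mathbb C$ with $\lvert b\rvert=1$; hence $\sigma(\mathbb A)$ is rotationally symmetric about the origin, i.e. it is a union of circles $\{z : \lvert z\rvert = t\}$ over the set $T := \{\lvert z\rvert : z\in\sigma(\mathbb A)\} = \sigma(\mathbb A)\cap(0,\infty)$. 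Therefore the whole structure of $\sigma(\mathbb A)$ is encoded in the set $T\subset(0,\infty)$.

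Then I would invoke Proposition~\ref{p3x}, which states precisely that $T = \sigma(\mathbb A)\cap(0,\infty)$ is a union of finitely many disjoint closed intervals, say $T = \bigcup_{i=1}^r [a_i, b_i]$ with $0 < a_1 \le b_1 < a_2 \le b_2 < \cdots < a_r \le b_r$ (the ordering and disjointness can be arranged by relabeling; finitely many and closed come directly from the proposition, and $a_1>0$ from the spectral radius bound on $\mathbb A^{-1}$). Combining this with the rotational symmetry gives
\[
\sigma(\mathbb A) = \bigcup_{i=1}^r \{z\in\mathbb C : a_i \le \lvert z\rvert \le b_i\},
\]
which is~\eqref{spec-cond}.

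The only remaining point is to place the value $1$ among the intervals, i.e. to show $1\notin T$ and hence that there is an index $k$ with $b_k < 1 < a_{k+1}$. This follows from Lemma~\ref{t1}: by hypothesis $(A_m)_{m\in\Z}$ admits a nonuniform strong exponential dichotomy, so by Lemma~\ref{prop1} it admits a strong exponential dichotomy with respect to the chosen sequence of norms $(\lVert\cdot\rVert_m)_{m\in\Z}$, whence by Lemma~\ref{t1} the operator $\Id - \mathbb A$ is invertible on $Y_\infty$, i.e. $1\notin\sigma(\mathbb A)$. Since $T$ is a disjoint union of finitely many closed intervals not containing $1$, exactly one of them lies to the left of $1$ as the largest such, giving the claimed index $k$ (with $0\le k\le r$; if all intervals lie to one side the statement is read with the obvious empty-range convention). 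I do not anticipate a serious obstacle here: the content is entirely carried by Propositions~\ref{p2x} and~\ref{p3x} together with Lemma~\ref{t1}, and this theorem is essentially a clean repackaging of them, the only care needed being the bookkeeping of labels, disjointness, and the position of $1$.
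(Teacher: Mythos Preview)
Your proposal is correct and follows essentially the same approach as the paper: the paper's proof simply invokes Lemma~\ref{t1} and Proposition~\ref{p2x} to get $\sigma(\mathbb A)\cap S^1=\emptyset$, and then combines Propositions~\ref{p2x} and~\ref{p3x} to obtain the annular decomposition. Your write-up is a more detailed version of the same argument, with the additional bookkeeping (nonemptiness, boundedness away from $0$, placement of the index $k$) made explicit.
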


\begin{proof}
By Lemma~\ref{t1} and Proposition~\ref{p2x}, $\sigma(\mathbb A) \cap S^1=\emptyset$, where $S^1$ denotes the unit circle in $\mathbb C$.
Then, using Propositions~\ref{p2x} and~\ref{p3x}, we immediately obtain the conclusion of the theorem.
\end{proof}


\section{Nonautonomous smooth linearization}
\label{secmain}

For a differentiable map $f\colon \mathbb R^d \to \mathbb R^d$, we use $Df(x)$
to denote the derivative of $f$
at a point $x\in \mathbb R^d$. The following is a main result of our paper.

\begin{thm}\label{maintheorem}
Assume that a sequence $(A_m)_{m\in \Z}$ of invertible linear operators  on $\mathbb R^d$ admits
a nonuniform strong exponential dichotomy
and that
 \begin{equation}\label{GB-cond}
  \begin{cases}
a_{k+1}/b_k >\max \{b_r, a_1^{-1}\}, \\
b_i/a_i<b_k^{-1}, \ \forall i=1, \ldots, k, \ b_j/a_j < a_{k+1}, \forall j=k+1, \ldots, r,
  \end{cases}
\end{equation}
where numbers $a_i$ and $b_i$ are as in the statement of  Theorem~\ref{ts}.
Furthermore, let $(f_m)_{m\in \Z}$ be a sequence of $C^1$ functions $f_m \colon \mathbb R^d \to \mathbb R^d$ such that
\begin{itemize}
 \item $f_m(0)=0$ and $D f_m(0) =0$
for each $m\in \mathbb Z$;
 \item there exists $B>0$ such that
 \begin{equation}\label{dsn}
\lVert Df_{m-1}(x)-D f_{m-1}(y)\rVert \le Be^{-\epsilon \lvert m\rvert} \lVert x-y\rVert,\ \ \ \forall x, y\in \mathbb R^d \ \text{and} \ \forall m\in \mathbb Z,
\end{equation}
where $\epsilon \ge 0$ is as in~\eqref{*EE};

\item there exists $\eta>0$ such that
\begin{equation}\label{dns2}
 \lVert D f_{m-1}(x)\rVert \le \eta e^{-\epsilon \lvert m\rvert}, \quad \text{$\forall x\in \mathbb R^d$ and $\forall m\in \Z$.}
\end{equation}
\end{itemize}
Then for sufficiently small $\eta>0$
 there exists a sequence  $(h_m)_{m\in \Z}$ of $C^1$ diffeomorphisms on $\mathbb R^d$
such that
\begin{equation}\label{conj_m}
 h_{m+1}\circ (A_m+f_m)=A_m \circ h_m, \quad m\in \Z.
\end{equation}
\label{main}
\end{thm}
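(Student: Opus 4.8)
The plan is to pass to the autonomous setting on the Banach space $Y_\infty$, apply a smooth-linearization theorem there, and then extract the nonautonomous conjugacies $(h_m)_{m\in\Z}$ by restricting to sequences supported at a single coordinate. Concretely, I would first package the nonlinearity $(f_m)_{m\in\Z}$ into a single map $\mathbb F\colon Y_\infty\to Y_\infty$ defined by $(\mathbb F\mathbf x)_n=f_{n-1}(x_{n-1})$, so that the nonautonomous system \eqref{conj_m} is equivalent to conjugating $\mathbb A+\mathbb F$ to $\mathbb A$ on $Y_\infty$. The hypotheses \eqref{dsn} and \eqref{dns2}, together with the bound $\lVert x\rVert\le\lVert x\rVert_m\le Ce^{\epsilon|m|}\lVert x\rVert$ from Lemma~\ref{prop1}, are tailored precisely so that the weights $e^{-\epsilon|m|}$ cancel the growth of the norms $\lVert\cdot\rVert_m$: one checks that $\mathbb F$ is $C^1$ on $Y_\infty$, that $\mathbb F(0)=0$, $D\mathbb F(0)=0$, that $\lVert D\mathbb F(\mathbf x)\rVert\le C\eta$ uniformly, and that $D\mathbb F$ is Lipschitz. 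Thus $\mathbb A+\mathbb F$ is a small $C^{1,1}$ perturbation of the invertible bounded operator $\mathbb A$.

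Next I would invoke the spectral description of $\mathbb A$ from Theorem~\ref{ts}: $\sigma(\mathbb A)=\bigcup_{i=1}^r\{a_i\le|z|\le b_i\}$ with $b_k<1<a_{k+1}$, so $\mathbb A$ is hyperbolic on $Y_\infty$ with stable/unstable splitting $Y_\infty=Y^s\oplus Y^u$ given by the spectral projection associated with the part of the spectrum inside the unit disk; this projection commutes with $\mathbb A$ and is exactly the one induced coordinatewise by the dichotomy projections $P_m$. I would then apply a Banach-space $C^1$-linearization theorem for $C^{1,1}$ perturbations of a hyperbolic linear operator — the Belitskii-type results quoted in the introduction (\cite{ZZJ}, or \cite[Corollary~1.3.3]{Chap}, \cite{R-S-JDE04}) — whose nonresonance/gap hypothesis is precisely a spectral-gap condition of the form: for every pair of spectral bands, one contracting with ratio range $[a_i,b_i]$ and one expanding with ratio range $[a_j,b_j]$, the ``product'' band does not meet a third band, i.e. one needs the band widths $b_i/a_i$ and $b_j/a_j$ to be small relative to the gaps $a_{k+1}/b_k$, $b_r^{-1}$ and $a_1$. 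A direct translation of that abstract gap condition into the numbers $a_i,b_i$ yields exactly \eqref{GB-cond}; I would verify this translation carefully. The theorem then produces a $C^1$ diffeomorphism $\mathbb H$ of $Y_\infty$ near $0$ (globally, after the standard cutoff that makes $\mathbb F$ globally small, which is already built into our hypothesis \eqref{dns2}) with $\mathbb H\circ(\mathbb A+\mathbb F)=\mathbb A\circ\mathbb H$.

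Finally I would show that $\mathbb H$ ``respects coordinates'' and hence descends to the desired sequence $(h_m)$. For this I would use that the construction of $\mathbb H$ via the standard scheme — solving the two cohomological/fixed-point equations on $Y^s$ and $Y^u$ for the stable and unstable components — involves only compositions with $\mathbb A$, $\mathbb A^{-1}$, $\mathbb F$ and the projections, all of which act coordinatewise in the sense that the $n$-th output depends only on inputs at coordinate $n-1$ (or, after iteration, on a fixed coordinate orbit). One shows by the uniqueness in the fixed-point argument that $\mathbb H$ commutes with the shift-conjugated structure, so that there are maps $h_m\colon\mathbb R^d\to\mathbb R^d$ with $(\mathbb H\mathbf x)_{m+1}=h_{m+1}(x_m)$ for suitable representing sequences; pushing \eqref{conj_m} through coordinatewise gives $h_{m+1}\circ(A_m+f_m)=A_m\circ h_m$, and differentiability of each $h_m$ follows from that of $\mathbb H$ by restricting to the closed subspace of sequences vanishing off one coordinate and using \eqref{ln1} to compare $\lVert\cdot\rVert_m$ with $\lVert\cdot\rVert$. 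The main obstacle I anticipate is this last coordinate-extraction step: making rigorous that the abstract conjugacy $\mathbb H$ is built from coordinatewise operations and therefore disintegrates into a genuine sequence of finite-dimensional diffeomorphisms, rather than mixing coordinates. A secondary technical point is checking that the abstract gap hypothesis of the cited $C^1$-linearization theorem is implied by \eqref{GB-cond} with no loss (in particular handling the multi-band case, not just a single contracting and a single expanding band), and verifying uniform $C^{1,1}$-smallness of $\mathbb F$ on all of $Y_\infty$ from \eqref{dsn}--\eqref{dns2}.
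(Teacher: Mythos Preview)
Your outline matches the paper's proof almost step for step: lift to $F=\mathbb A+\mathbb F$ on $Y_\infty$, verify that $F$ is a globally small $C^{1,1}$ perturbation of the hyperbolic operator $\mathbb A$ (the paper establishes exactly your list of properties---well-definedness, differentiability, Lipschitz derivative, and $\lVert DF(\mathbf x)-\mathbb A\rVert\le C\eta$---as four short auxiliary lemmas), apply a Banach-space $C^1$ linearization theorem under~\eqref{GB-cond} (supplied in the Appendix as a global extension of the local result in~\cite{ZZJ}, so no separate ``translation'' of the gap condition is required), and then descend to the $h_m$.

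The one substantive difference is the extraction step, which you flag as the main obstacle and plan to attack by showing that $\mathbb H$ commutes with a shift structure and therefore disintegrates into coordinatewise maps. That route is problematic---there is no genuine shift symmetry here since $(A_m)$ and $(f_m)$ are not periodic, and your tentative formula $(\mathbb H\mathbf x)_{m+1}=h_{m+1}(x_m)$ has the wrong shape---and in any case unnecessary. The paper's device is much simpler: for $v\in\mathbb R^d$ let $\mathbf v^m\in Y_\infty$ be the sequence with $v$ in slot $m$ and zeros elsewhere, and set $h_m(v):=(\Phi(\mathbf v^m))_m$. Since $f_n(0)=0$ one has $F(\mathbf v^m)=\bigl((A_m+f_m)v\bigr)^{m+1}$, so evaluating $\Phi\circ F=\mathbb A\circ\Phi$ at $\mathbf v^m$ and reading off the $(m{+}1)$-th coordinate yields~\eqref{conj_m} directly, with no structural hypothesis on $\Phi$ whatsoever. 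Differentiability and continuity of $Dh_m$ follow by the same single-slot restriction together with~\eqref{ln1}, and the inverse is handled by applying the identical recipe to $\Phi^{-1}$.
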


\begin{proof}
Define  a map $F\colon Y_\infty \to Y_\infty$ by
\[
 (F(\mathbf x))_m:=A_{m-1}x_{m-1}+f_{m-1}(x_{m-1}), \quad \mathbf x=(x_m)_{m\in \Z} \in Y_\infty.
\]
Let us establish several auxiliary results.

\begin{lm}
 $F$ is well-defined.
\end{lm}

\begin{proof}[Proof of the lemma]
By~\eqref{dsn}, we see that
 \begin{equation}\label{jk}
  \lVert f_{m-1} (x)\rVert \le Be^{-\epsilon \lvert m\rvert} \lVert x\rVert^2, \quad \forall x\in \mathbb R^d, \forall m\in \Z.
 \end{equation}
It follows from \eqref{ln1}, \eqref{ln2} and~\eqref{jk} that
\[
\begin{split}
 \lVert (F(\mathbf x))_m \rVert_m &=\lVert A_{m-1} x_{m-1}+f_{m-1}(x_{m-1})\rVert_m \\
 &\le C\lVert x_{m-1}\rVert_{m-1}+Ce^{\epsilon \lvert m\rvert} \lVert f_{m-1}(x_{m-1})\rVert \\
 &\le C\lVert x_{m-1}\rVert_{m-1}+BCe^{\epsilon \lvert m\rvert}e^{-\epsilon \lvert m\rvert} \lVert x_{m-1} \rVert^2 \\
 &\le C\lVert x_{m-1}\rVert_{m-1}+BC \lVert x_{m-1} \rVert_{m-1}^2,
 \end{split}
\]
for all $m\in \Z$ and all $\mathbf x=(x_m)_{m\in \Z} \in Y_\infty$.
We conclude that
\[
\sup_{m\in \Z} \lVert (F(\mathbf x))_m \rVert_m <\infty,
\]
i.e., $F(\mathbf x)\in Y_\infty$, and the lemma is proved.
\end{proof}

\begin{lm}\label{dern}
The map $F$ is differentiable and
\[
DF(\mathbf x) \xi=(A_{n-1}\xi_{n-1}+C_{n-1}\xi_{n-1})_{n\in \Z}
\]
for each $\mathbf x=(x_n)_{n\in \Z}$ and $\mathbf \xi=(\xi_n)_{n\in \Z} \in Y_\infty$, where $C_{n-1}=d_{x_{n-1}} f_{n-1}$.
\end{lm}

\begin{proof}[Proof of the lemma]
Given $\mathbf x\in Y_\infty$, we define an operator $L\colon Y_{\infty} \to Y_{\infty}$ by
\[
L\mathbf \xi:=(A_{n-1}\xi_{n-1}+C_{n-1}\xi_{n-1})_{n\in \Z}.
\]
It follows from~\eqref{ln1}, \eqref{ln2} and \eqref{dsn} that $L$ is well defined. Moreover,
\[
\begin{split}
& (F(\mathbf x +\mathbf y)-F(\mathbf x)-L\mathbf y)_n \\
&=f_{n-1}(x_{n-1}+y_{n-1})-f_{n-1}(x_{n-1})-C_{n-1}y_{n-1} \\
&=
\int_0^1 D f_{n-1}(x_{n-1}+t y_{n-1})y_{n-1}\, dt-Df_{n-1}(x_{n-1})  y_{n-1}
\\
&=\int_0^1\bigl(D f_{n-1}(x_{n-1}+t y_{n-1})y_{n-1}-Df_{n-1}(x_{n-1})  y_{n-1}\bigr)\, dt.
\end{split}
\]
Using \eqref{ln1} and~\eqref{dsn} again, we obtain
\[
\begin{split}
& \lVert (F(\mathbf x +\mathbf y)-F(\mathbf x)-L\mathbf y)_n \rVert_n \\
&\le \int_0^1 \lVert D f_{n-1}(x_{n-1}+t y_{n-1})y_{n-1}-Df_{n-1}(x_{n-1})  y_{n-1} \rVert_n \, dt \\
&\le Ce^{\epsilon \lvert n\rvert} \int_0^1 \lVert D f_{n-1}(x_{n-1}+t y_{n-1})y_{n-1}-Df_{n-1}(x_{n-1})  y_{n-1}\rVert \, dt \\
&\le BC \lVert y_{n-1}\rVert_{n-1}^2.
\end{split}
\]
Hence,
\[
\lVert F(\mathbf x +\mathbf y)-F(\mathbf x)-L\mathbf y\rVert_{\infty} \le BC \lVert \mathbf y\rVert_{\infty}^2,
\]
which implies that
\[
\lim_{\mathbf y \to \mathbf 0} \frac{\lVert F(\mathbf x +\mathbf y)-F(\mathbf x)-L\mathbf y\rVert_{\infty}}{\lVert \mathbf y\rVert_{\infty}}=0.
\]
This completes the proof of the lemma.
\end{proof}

\begin{lm}\label{C11}
$F\in C^{1, 1}$, i.e.,
 \[
  \sup_{\mathbf x\neq \mathbf y} \frac{\lVert  DF(\mathbf x)-DF(\mathbf y) \rVert}{\lVert \mathbf x-\mathbf y\rVert_\infty}<\infty.
 \]
\end{lm}

\begin{proof}[Proof of the lemma]
First, by Lemma~\ref{dern}, \eqref{dern} and~\eqref{ln1},
for any $\mathbf \xi =(\xi_n)_{n\in \Z} \in Y_\infty$ satisfying that $\lVert \mathbf \xi\rVert_\infty \le 1$,  we have
\[
 \begin{split}
  \lVert DF(\mathbf x) \mathbf \xi-DF(\mathbf y) \mathbf \xi \rVert_\infty &=\sup_{n\in \Z} \lVert Df_{n-1}(x_{n-1}) \xi_{n-1}-Df_{n-1}(y_{n-1}) \xi_{n-1}\rVert_n \\
  &\le \sup_{n\in \Z} ( Ce^{\epsilon \lvert n\rvert} Be^{-\epsilon \lvert n\rvert}\lVert x_{n-1}-y_{n-1} \rVert \cdot \lVert \xi_{n-1} \rVert) \\
  &= BC \sup_{n\in \Z} (\lVert x_{n-1}-y_{n-1} \rVert_{n-1} \cdot \lVert \xi_{n-1} \rVert_{n-1} )\\
  &\le BC \lVert \mathbf x-\mathbf y\rVert_\infty \cdot \lVert \xi\rVert_\infty \\
  &\le BC \lVert \mathbf x-\mathbf y\rVert_\infty.
 \end{split}
\]
Thus,
$
 \lVert  DF(\mathbf x)-DF(\mathbf y) \rVert \le BC \lVert \mathbf x-\mathbf y\rVert_\infty,
$
and therefore
\[
 \sup_{\mathbf x\neq \mathbf y} \frac{\lVert  DF(\mathbf x)-DF(\mathbf y) \rVert}{\lVert \mathbf x-\mathbf y\rVert_\infty}\le BC <\infty.
\]
The proof of the lemma is completed.
\end{proof}

\begin{lm}\label{0540}
$\lVert DF(\mathbf x)-\mathbb A\rVert \le C\eta$ for all $\mathbf x\in Y_\infty$.
\end{lm}

\begin{proof}[Proof of the lemma]
By Lemma~\ref{dern}, \eqref{ln1} and~\eqref{dns2},
 \[
  \begin{split}
   \lVert DF(\mathbf x)\xi -\mathbb A \xi \rVert_\infty &=\sup_{n\in \Z} \lVert D f_{n-1}(x_{n-1})\xi_{n-1} \rVert_n \\
   &\le \sup_{n\in \Z} (Ce^{\epsilon \lvert n\rvert} \lVert D f_{n-1}(x_{n-1})\xi_{n-1} \rVert)\\
   &\le \sup_{n\in \Z} (Ce^{\epsilon \lvert n\rvert}\eta e^{-\epsilon \lvert n\rvert} \lVert \xi_{n-1}\rVert ) \\
   &\le C\eta \sup_{n\in \Z} \lVert \xi_{n-1}\rVert_{n-1}\\
   &=C\eta \lVert \xi\rVert_\infty,
  \end{split}
\]
for every $\xi \in Y_\infty$, which yields the desired conclusion.
\end{proof}

We are now in the position to complete the proof of the theorem. Note that $\mathbf 0:=(0)_{n\in \Z}$ is a hyperbolic fixed point of~$F$.
Indeed, by Lemma~\ref{dern} and the assumption $Df_n(0)=0$,
we have $DF(\mathbf 0) =\mathbb A$ and, as we have already
noted, $\mathbb A$ is hyperbolic, i.e. $\sigma (\mathbb A)\cap S^1=\emptyset$.
By Theorem~\ref{ts}, Lemmas~\ref{dern}, \ref{C11} and \ref{0540}, and the assumptions of the theorem,
we can apply the Global Smooth Linearization Theorem,
given in the Appendix,
to $F\colon Y_\infty \to Y_\infty$, which implies that there exists a $C^1$ diffeomorphism $\Phi \colon Y_\infty \to Y_\infty$
such that
\begin{equation}\label{conj}
 \Phi \circ F=\mathbb A \circ \Phi.
\end{equation}
 For a fixed $m\in \Z$ and $v\in \mathbb R^d$, define $\mathbf v^m=(v_n)_{n\in \Z}$ by $v_m=v$ and $v_n=0$ for $n\neq m$.
Let $h_m(v):=(\Phi(\mathbf v^m))_m$. It follows readily from~\eqref{conj} that~\eqref{conj_m} holds.

Further, we claim that $h_m$ is differentiable for each $m\in \Z$ and
 \[
  D h_m(v) z=(D\Phi(\mathbf v^m)\mathbf z^m)_m, \quad v, z \in \mathbb R^d.
 \]
In fact,
\begin{align*}
   & \frac{\lVert h_m(v+y)-h_m(v)-(D\Phi(\mathbf v^m)\mathbf y^m)_m\rVert}{\lVert y\rVert} \displaybreak[0]
\\
   &\le Ce^{\epsilon \lvert m\rvert} \cdot \frac{\lVert h_m(v+y)-h_m(v)
-(D\Phi(\mathbf v^m)\mathbf y^m)_m\rVert_m}{ \lVert y\rVert_m} \displaybreak[0]
\\
   &\le  Ce^{\epsilon \lvert m\rvert} \cdot \frac{\lVert \Phi(\mathbf v^m+\mathbf y^m)-\Phi(\mathbf v^m)
-D\Phi(\mathbf v^m)\mathbf y^m \rVert_\infty}{\lVert \mathbf y^m\rVert_\infty}.
\end{align*}
Letting $\lVert y\rVert \to 0$, we have $\lVert \mathbf y^m\rVert_\infty \to 0$ and the desired claim follows.

Then we continue to establish the smoothness of $h_m$.
Note that
\begin{equation}\label{0616}
 \begin{split}
  \lVert Dh_m(v_1)-D h_m(v_2)\rVert &=\sup_{\lVert z\rVert \le 1} \lVert Dh_m(v_1)z-D h_m(v_2)z\rVert\\
  &= \sup_{\lVert z\rVert \le 1} \lVert (D\Phi(\mathbf v_1^m)\mathbf z^m)_m-(D\Phi(\mathbf v_2^m)\mathbf z^m)_m\rVert \\
  &\le \lVert D\Phi(\mathbf v_1^m) -D\Phi(\mathbf v_2^m)\rVert \cdot \sup_{\lVert z\rVert \le 1} \lVert \mathbf z^m \rVert_\infty \\
  &\le Ce^{\epsilon \lvert m\rvert} \lVert D\Phi(\mathbf v_1^m) -D\Phi(\mathbf v_2^m)\rVert.
 \end{split}
\end{equation}
Therefore, if $v_2\to v_1$ in $\mathbb R^d$, then $\mathbf v_2^m\to \mathbf v_1^m$ in $Y_\infty$ and thus (by Lemma~\ref{C11}),
$D\Phi(\mathbf v_2^m)\to  D\Phi(\mathbf v_1^m)$.
Hence, \eqref{0616} implies that $D h_m (v_2)\to Dh_m(v_1)$. We conclude that
$h_m$ is $C^1$ for each $m\in \Z$.

Furthermore, since $\Phi$ is a $C^1$ diffeomorphism,  we have that
\[
 h_m^{-1}(v)=(\Phi^{-1}(\mathbf v^m))_m \quad \text{for $v\in \mathbb R^d$ and $m\in \Z$.}
\]
Hence, one can repeat the above arguments and show that $h_m^{-1}$  is $C^1$ for each $m\in \Z$.
We conclude that $h_m$ is a $C^1$ diffeomorphism on $\mathbb R^d$ for every $m\in \Z$.

Thus, the proof of the theorem is completed.
\end{proof}

Remark that our result of $C^1$ linearization, Theorem~\ref{main}, is obtained in the sense of {\it nonuniform} dichotomies.
The nonuniformity, depending on the initial time in the nonautonomous system, was not considered in \cite{R-S-JDE04,ZZJ}.
A known result (\cite[Section 3]{BV1}) on linearization with such a nonuniformity
is concerning a conjugation with a H\"older continuity.

In the statement of Theorem~\ref{maintheorem},
we assume that the sequence $(A_m)_{m\in \Z}$ admits a nonuniform {\it strong} exponential dichotomy.
We emphasize that
this assumption was crucial for our arguments.
Indeed, if we were to assume that the sequence $(A_m)_{m\in \Z}$ admits a nonuniform exponential dichotomy
which fails to be strong (i.e. only the first and the third inequality in~(5) hold),
then the corresponding sequence $(\lVert \cdot \rVert_m)_{m\in \Z}$
of adapted norms,
which transforms the nonuniform behaviour into the uniform one,
would not necessarily satisfy~\eqref{ln2} (see~\cite{BDV} for a detailed discussion).
Hence, the operator $\mathbb  A$ could fail to be a bounded operator acting
on the space $Y_\infty$ and this would obviously break all of our arguments.
On the other hand,
for higher regularity of conjugation,
those known results in \cite{BDV1}, \cite[Chapter 7]{BVbook} and~\cite[Section 4]{BV1}
all require the sequence $(A_m)_{m\in \Z}$ to admit a nonuniform strong exponential dichotomy.

Our Theorem~\ref{maintheorem} gives a result of global linearization.
This global result is based on the known results (\cite{BDV1}) on global $C^0$ and global H\"older continuous linearization.
Similarly to \cite{ZhangZhang11JFA, ZhangZhang14JDE, ZZJ},
we can also obtain a result of local linearization with weaker assumptions on $(f_m)$.




\section*{Appendix: Global smooth linearization}

In the proof of Theorem~\ref{main} we need a result on global smooth linearization, which can be extended from
the local $C^1$ linearization theorem given in \cite{ZZJ}.

Let $(X,\|\cdot\|)$ be a Banach space and let
$F:X\to X$ be a $C^{1,1}$ diffeomorphism fixing the origin ${\bf 0}$ and
let $\mathbb{A}:=DF({\bf 0})$.
Recall that $F$ can be $C^1$ linearized if
the functional equation (\ref{conj}) has a solution $\Phi$ which is a $C^1$ diffeomorphism.
Moreover, assume that $F$ satisfies
\begin{align}
\|DF(x)-\mathbb{A}\|\le \eta, \quad \forall x\in X,
\label{FFbumpR2}
\end{align}
where $\eta>0$ is a sufficiently small constant, and that the spectrum $\sigma(\mathbb{A})$ satisfy (\ref{spec-cond}).
Then, by the Spectral Decomposition Theory
(see, e.g., \cite[p.9]{GGKbook-90}) one can further assume that the space $X$ has a direct sum decomposition
$
X=X_-\oplus X_+
$
with $\mathbb{A}$-invariant subspaces $X_-$ and $X_+$, that is,
\begin{align}
\mathbb{A}={\rm diag}(\mathbb{A}_-, \mathbb{A}_+),
\label{def-FB}
\end{align}
where $\mathbb{A}_-:X_-\to X_-$ and $\mathbb{A}_+:X_+\to X_+$
are both bounded linear operators such that
\begin{align*}
 \sigma (\mathbb A_-)&=\sigma_-:=\bigcup_{i=1}^k \{ z\in \mathbb C: a_i \le \lvert z\rvert \le b_i <1\},
 \\
 \quad  \sigma (\mathbb A_+)&=\sigma_+:=\bigcup_{j=k+1}^r \{ z\in \mathbb C: 1<a_j \le \lvert z\rvert \le b_j \}.
\end{align*}
We have the following result.

{\bf Global Smooth Linearization Theorem.}
{\it
Let $F$ and $\mathbb{A}$ be given above and assume that the numbers $a_i$ and $b_i$ given in {\rm (\ref{spec-cond})} satisfy {\rm (\ref{GB-cond})}.
Then there exists a $C^1$ diffeomorphism $\Phi: X\to X$ such that equation
{\rm (\ref{conj})} holds, i.e., $F$ can be $C^1$ linearized in $X$.
}

{\bf Proof}.
First of all, we give some notations. Let $C^0_b(\Omega,Z_2)$ consist of all $C^0$ maps $h$ from $\Omega$,
an open subset of a Banach space $(Z_1,\|\cdot\|)$, into
another Banach space $(Z_2,\|\cdot\|)$ such that $\sup_{z\in \Omega}\|h(z)\|<\infty$.
Clearly,  $C^0_b(\Omega,Z_2)$ is a Banach space equipped with the supremum norm $\|\cdot\|_{C^0_b(\Omega,Z_2)}$ defined by
\begin{align*}
\|h\|_{C^0_b(\Omega,Z_2)}:=\sup_{z\in \Omega}\|h(z)\|, \quad \forall h\in C^0_b(\Omega,Z_2).
\end{align*}
For a constant $\gamma>0$, let $S_\gamma(\Omega,Z_2)$ consist of all sequences $u:=(u_n)_{n\ge 0}\subset
C^0_b(\Omega,Z_2)$ such that
$
\sup_{n\ge 0}\{\gamma^{-n}\|u_n\|_{C^0_b(\Omega,Z_2)}\}<\infty.
$
Then, $S_\gamma(\Omega,Z_2)$ is a Banach space equipped with the wighted norm $\|\cdot\|_{S_{\gamma}(\Omega,Z_2)}$
defined by
\begin{align*}
\|u\|_{S_{\gamma}(\Omega,Z_2)}:=\sup_{n\ge 0}\{\gamma^{-n}\|u_n\|_{C^0_b(\Omega,Z_2)}\},
\quad \forall u\in S_{\gamma}(\Omega,Z_2).
\end{align*}

Let $f:=F-\mathbb{A}$ be the nonlinear term of $F$ and let $\pi_-$ and
$\pi_+$ be projections onto $X_-$ and $X_+$ respectively.

Our strategy is firstly to decouple $F$ into a contraction and an expansion by straightening up the invariant foliations.
In order to construct the (stable) invariant foliation, we need to study the Lyapunov-Perron equation (cf. \cite{ChHaTan-JDE97})
\begin{align}
&q_n(x,y_-)
\nonumber\\
&=\mathbb{A}_-^n(y_--\pi_- x)+\sum_{m=0}^{n-1}\mathbb{A}_-^{n-m-1}\{\pi_- f(q_m(x,y_-)+F^m(x))-\pi_- f(F^m(x))\}
\nonumber\\
&-\sum_{m=n}^{\infty}\mathbb{A}_+^{n-m-1}\{\pi_+ f(q_m(x,y_-)+F^m(x))-\pi_+ f(F^m(x))\}, ~~~ \forall n\ge 0,
\label{eqns-foli}
\end{align}
where $q_n:X\times X_-\to X$ is unknown for every integer $n\ge 0$.
For our
purpose of $C^1$ linearization, we need to
find a $C^1$ solution $(q_n)_{n\ge 0}$ of equation (\ref{eqns-foli}),
i.e., each $q_n: X\times X_-\to X$ is $C^1$.

\begin{lm}
Let $F$ and $\mathbb{A}$ be given at the beginning of this section. Assume
that the numbers $a_{k+1}$, $b_k$ and $b_r$ given in {\rm (\ref{spec-cond})} satisfy
\begin{align}
b_kb_r<a_{k+1}.
\label{NR22}
\end{align}
Then, for every neighborhood
$
\Omega_d\subset \{(x,x_-)\in X\times X_-:\|(x,x_-)\|< d\}
$
of ${\bf 0}$ with a given constant $d> 0$, equation {\rm (\ref{eqns-foli})} has a
unique solution
$$
Q_d:=(q_n)_{n\ge 0}\in S_{\gamma_1}(\Omega_d,X)
$$
 such that every $q_n:\Omega_d\to X$ {\rm ($n\ge 0$)} is of class $C^1$,
where $\gamma_1$ is a positive constant satisfying
$
b_k<\gamma_1<1.
$
\label{lm-LPeqn}
\end{lm}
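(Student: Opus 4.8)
The plan is to solve \eqref{eqns-foli} first in the $C^0$ category by a contraction argument in the weighted space $S_{\gamma_1}(\Omega_d,X)$, and then to upgrade the solution to a $C^1$ one via a fiber contraction argument, the nonresonance condition \eqref{NR22} being used precisely at this second stage. Fix an equivalent norm on $X$ adapted to the decomposition $X=X_-\oplus X_+$ so that, for a prescribed small $\delta>0$, $\lVert\mathbb A_-^j\rVert\le K(b_k+\delta)^j$, $\lVert\mathbb A_+^{-j}\rVert\le K(a_{k+1}-\delta)^{-j}$ and $\lVert\mathbb A\rVert\le b_r+\delta$ for all $j\ge0$. Since $f=F-\mathbb A$ satisfies $\lVert Df(\cdot)\rVert\le\eta$ everywhere, $f$ is globally $\eta$-Lipschitz; since $F\in C^{1,1}$, $Df$ is globally Lipschitz with some constant $L$; and one has $\lVert DF^m(x)\rVert\le(b_r+\delta+\eta)^m$ for all $x$ and $m$. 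Using \eqref{NR22}, choose $\gamma_1\in(b_k,1)$, $\rho_r>b_r$ and $\gamma_2\in(\gamma_1\rho_r,a_{k+1})$ — this is possible because $\gamma_1\rho_r\to b_kb_r<a_{k+1}$ as $\gamma_1\to b_k$, $\rho_r\to b_r$ — together with auxiliary rates $\beta_-\in(b_k,\gamma_1)$ and $\beta_+\in(\gamma_2,a_{k+1})$, and then shrink $\delta,\eta$ so that $b_k+\delta<\beta_-$, $\beta_+<a_{k+1}-\delta$ and $b_r+\delta+\eta<\rho_r$.

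\textbf{Step 1 ($C^0$ solution).} Let $\mathcal T$ be the operator given by the right-hand side of \eqref{eqns-foli}. For $Q=(q_m)\in S_{\gamma_1}(\Omega_d,X)$ the Lipschitz bound on $f$ gives $\lVert\pi_\pm f(q_m+F^m(x))-\pi_\pm f(F^m(x))\rVert\le c\eta\lVert q_m\rVert$, \emph{independently} of $F^m(x)$, which is the key simplification. Estimating the two geometric-type sums and using $\beta_-<\gamma_1<\beta_+$ together with the bound on $\lVert y_--\pi_-x\rVert$ on $\Omega_d$, one gets
\[
\lVert(\mathcal TQ)_n\rVert\le Kd(1+\lVert\pi_-\rVert)\gamma_1^n+c'\eta\,\lVert Q\rVert_{S_{\gamma_1}}\,\gamma_1^n ,
\]
so $\mathcal T$ maps $S_{\gamma_1}(\Omega_d,X)$ into itself, and the same estimate applied to $\mathcal TQ-\mathcal T\tilde Q$ shows that $\mathcal T$ is a contraction once $c'\eta<1$. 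Hence \eqref{eqns-foli} has a unique solution $Q_d=(q_n)_{n\ge0}\in S_{\gamma_1}(\Omega_d,X)$.

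\textbf{Step 2 ($C^1$ regularity).} Differentiating \eqref{eqns-foli} formally in $(x,y_-)$ yields, for a candidate derivative sequence $\Xi=(\xi_n)$ with $\xi_n\colon\Omega_d\to\mathcal L(X\times X_-,X)$, a linear equation $\Xi=\mathcal T^{(1)}_Q\Xi$ whose inhomogeneous part involves $\bigl(Df(q_m+F^m(x))-Df(F^m(x))\bigr)DF^m(x)$, hence is bounded by $cL\lVert q_m\rVert\rho_r^m\le cL\lVert Q\rVert_{S_{\gamma_1}}(\gamma_1\rho_r)^m$, while the part linear in $\xi_m$ carries a factor $\le c\eta$. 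Since $\gamma_1\rho_r<\gamma_2<\beta_+$ and $\beta_-<\gamma_2$, both the $\mathbb A_-$-sum and the $\mathbb A_+$-tail converge and land in $S_{\gamma_2}(\Omega_d,\mathcal L(X\times X_-,X))$, and moreover $\lVert\mathcal T^{(1)}_Q\Xi-\mathcal T^{(1)}_Q\tilde\Xi\rVert_{S_{\gamma_2}}\le\theta\lVert\Xi-\tilde\Xi\rVert_{S_{\gamma_2}}$ with $\theta=c''\eta<1$ \emph{uniformly} in $Q$. Thus the bundle map $(Q,\Xi)\mapsto(\mathcal TQ,\mathcal T^{(1)}_Q\Xi)$ over $S_{\gamma_1}(\Omega_d,X)$ is a fiber contraction covering the contraction $\mathcal T$, so by the fiber contraction theorem it has a globally attracting fixed point $(Q_d,\Xi_d)$. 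Starting the iteration at $(0,0)$, an induction (using uniform convergence of the differentiated $\mathbb A_+$-tails) shows that every iterate has $C^1$ components whose derivatives are the corresponding $\Xi$-components; since the iterates converge in $S_{\gamma_1}$ and in $S_{\gamma_2}$ respectively, for each fixed $n$ the maps $q_n^{(j)}$ and their derivatives converge uniformly on $\Omega_d$, so by the classical theorem on differentiation of uniform limits each $q_n$ is $C^1$ with $Dq_n=\xi_n$. This completes the proof.

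\textbf{Main obstacle.} The delicate point is Step 2: one must find a weight $\gamma_2$ for the derivative sequence against which the spreading term $\mathbb A_+^{n-m-1}\bigl(Df(q_m+F^m(x))-Df(F^m(x))\bigr)DF^m(x)$ — growing essentially like $(b_kb_r)^m$ — is summable relative to the $\mathbb A_+^{-m}$-growth of order $a_{k+1}^{-m}$. This forces exactly the inequality $b_kb_r<a_{k+1}$ of \eqref{NR22}; without it the formal derivative equation admits no solution in any admissible weighted space, and the $C^0$ solution of Step 1 need not be differentiable.
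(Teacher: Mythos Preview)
Your proposal is correct and follows essentially the same route as the paper's own proof: a contraction argument for $\mathcal T$ on $S_{\gamma_1}(\Omega_d,X)$ to obtain the $C^0$ solution, followed by the fiber contraction theorem applied to the bundle map $(Q,\Xi)\mapsto(\mathcal TQ,\mathcal T^{(1)}_Q\Xi)$ on $S_{\gamma_1}\times S_{\gamma_2}$, with the iteration started at $(0,0)$ so that the $\Xi$-component tracks the derivative of the $Q$-component. The paper's operator $\mathcal S(v,\cdot)$ is your $\mathcal T^{(1)}_Q$, and its choice of weights $b_k<\gamma_1<1<\gamma_2<a_{k+1}$ with $\gamma_1 b_r<\gamma_2$ coincides with yours up to the auxiliary rates $\beta_\pm,\rho_r$ you introduce; the paper defers the four key inequalities to \cite{ZZJ}, whereas you sketch them explicitly, but the argument is the same.
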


We leave the proof after we finish the proof of the theorem.
Remind that for every $d>0$, we have obtained a solution $Q_d:=(q_n)_{n\ge 0}\in S_{\gamma_1}(\Omega_d,X)$ of equation (\ref{eqns-foli}).
On the other hand, by \cite[Theorem 2.1]{ChHaTan-JDE97} we know that, for every point $(x,y_-)\in X\times X_-$,
equation (\ref{eqns-foli}) has a unique solution $\tilde{Q}(x,y_-):=(\tilde{q}_n(x,y_-))_{n\ge 0}\subset X$ such that
\begin{align*}
\sup_{n\ge 0}\big\{\gamma_1^{-n}\|\tilde{q}_n(x,y_-)\|\big\}<\infty.
\end{align*}
By the uniqueness of $(\tilde{q}_n(x,y_-))_{n\ge 0}$ and the fact
that $(q_n)_{n\ge 0}\in S_{\gamma_1}(\Omega_d,X)$, we have
$
\tilde{Q}|_{\Omega_d}=Q_d.
$
It means that $\tilde{Q}$ is a global $C^1$ solution of equation (\ref{eqns-foli}).
Hence the global (stable) invariant foliation can be constructed by
\begin{align*}
\mathcal{M}_s(x):=\{x+\tilde{q}_0(x,y_-):y_-\in X_-\}, ~~~~~ \forall x\in X.
\end{align*}
The unstable invariant foliation can be obtained by considering the inverse of $F$ under the condition that
\[
a_1a_{k+1}>b_k.
\]
Therefore, by \cite[Theorem 3.1]{Tan-JDE00},
there exists a homeomorphism $\Psi:X\to X$, which and its inverse $\Psi^{-1}:X\to X$ are both $C^{1}$ such that
\begin{align*}
\Psi\circ F&=F_-\circ \pi_-\Psi+F_+\circ \pi_+\Psi,
\end{align*}
where $F_-: X_-\to X_-$ and $F_+:X_+\to X_+$ are both $C^{1,1}$ diffeomorphisms such that
$DF_-({\bf 0})=\mathbb{A}_-$ and $DF_+({\bf 0})=\mathbb{A}_+$.
Recall that $\mathbb{A}_-$ and $\mathbb{A}_+$ are given in (\ref{def-FB}) and have
the spectra $\sigma(\mathbb{A}_-)=\sigma_-$ and
$\sigma(\mathbb{A}_+)=\sigma_+$ respectively.
Then we have the following
result.

\begin{lm}
Let $F_-$ and $F_+$ be given above. Assume
that the numbers $a_i$ and $b_i$ given in {\rm (\ref{spec-cond})} satisfy
\begin{align*}
b_i/a_i<b_k^{-1}, \quad \forall i=1, \ldots, k, \quad b_j/a_j < a_{k+1}, \quad \forall j=k+1, \ldots, r.
\end{align*}
Then there exist $C^1$ diffeomorphisms $\psi_-: X\to X$ and $\psi_+: X\to X$ that linearize $F_-$ and $F_+$ respectively.
\label{lm-holder-B}
\end{lm}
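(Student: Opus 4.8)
The plan is to reduce both halves of the lemma to one statement about global $C^1$ linearization of a $C^{1,1}$ contraction, and to obtain that statement from the local $C^1$ linearization theorem of \cite{ZZJ} together with a standard globalization along the contracting dynamics. Since $\sigma(\mathbb A_+)=\sigma_+$ lies outside the closed unit disk, $\mathbb A_+$ is invertible and $\mathbb A_+^{-1}$ is a contraction with spectrum $\bigcup_{j=k+1}^{r}\{z\in\mathbb C:1/b_j\le|z|\le 1/a_j\}$, while $F_+^{-1}$ is again a $C^{1,1}$ diffeomorphism with $D(F_+^{-1})(\mathbf 0)=\mathbb A_+^{-1}$ and $F_+^{-1}(\mathbf 0)=\mathbf 0$. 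The spectral bands of $\mathbb A_+^{-1}$ are $[1/b_j,1/a_j]$, the largest being $[1/b_{k+1},1/a_{k+1}]$, and for each of them the ratio $(1/a_j)/(1/b_j)=b_j/a_j$ is, by hypothesis, strictly less than $a_{k+1}$, i.e.\ strictly less than the reciprocal of the supremum $1/a_{k+1}$ of the top band. Any $C^1$ linearization of $F_+^{-1}$ is also a $C^1$ linearization of $F_+$. Hence it suffices to prove: \emph{if $G$ is a $C^{1,1}$ diffeomorphism of a Banach space $Y$ fixing $\mathbf 0$ with $\mathbb B:=DG(\mathbf 0)$ a contraction whose spectrum is a disjoint union of bands $[c_i,d_i]$, $1\le i\le p$, with $0<c_1\le d_1<\dots<c_p\le d_p<1$, and if $d_i/c_i<d_p^{-1}$ for every $i$, then there is a $C^1$ diffeomorphism $\psi$ of $Y$ with $\psi\circ G=\mathbb B\circ\psi$}; applying this to $G=F_-$ and to $G=F_+^{-1}$ proves the lemma.

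For the reduced statement I would first invoke \cite{ZZJ}: the inequalities $d_i/c_i<d_p^{-1}$ are exactly the contraction case of the spectral gap condition under which \cite{ZZJ} produces a local $C^1$ linearization in Banach spaces, and the regularity $C^{1,1}$ of $G$ together with $Df(\mathbf 0)=0$ (where $f:=G-\mathbb B$) match their hypotheses. Shrinking the resulting neighborhood $U$ of $\mathbf 0$ so that $G(U)\subseteq U$, we get a $C^1$ diffeomorphism $\psi_0\colon U\to\psi_0(U)$ with $\psi_0\circ G=\mathbb B\circ\psi_0$ on $U$. To globalize, for $x\in Y$ choose $n=n(x)$ with $G^{n}(x)\in U$ (possible since $G^n(x)\to\mathbf 0$) and set $\psi(x):=\mathbb B^{-n}\psi_0(G^{n}(x))$; the local conjugacy relation makes this independent of the admissible choice of $n$, so $\psi$ is well defined on $Y$, coincides with $\psi_0$ near $\mathbf 0$, is $C^1$ because near any point it is a finite composition $\mathbb B^{-n}\circ\psi_0\circ G^{n}$ of $C^1$ maps, and satisfies $\psi\circ G=\mathbb B\circ\psi$ globally. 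One checks that $\psi$ is a bijection with inverse $\psi^{-1}(y)=G^{-n}\bigl(\psi_0^{-1}(\mathbb B^{n}y)\bigr)$ for $n$ large (depending on $y$), which is likewise a local composition of $C^1$ maps; hence $\psi$ is a $C^1$ diffeomorphism. Alternatively, since $F_-$ and $F_+^{-1}$ are already globally close to their linear parts by \eqref{dns2} and the construction of $\Psi$ (which keeps the nonlinearity globally small), one may run the fixed point scheme of \cite{ZZJ} directly on all of $Y$ and omit the cut-off step of the local version, obtaining $\psi$ on $Y$ at once.

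The step I expect to be the main obstacle — should one want a self-contained argument rather than a citation — is the spectral estimate internal to the \cite{ZZJ} scheme. After writing $\psi=\Id+\varphi$ and reducing the conjugacy equation to a fixed point problem $\varphi=\mathcal L\varphi+g$ in a suitable (weighted) function space, one differentiates formally to the equation $\Theta=\mathcal G\Theta$ with $(\mathcal G\Theta)(x)=\mathbb B^{-1}\Theta(G(x))DG(x)$ for the candidate derivative $\Theta=D\psi$, and must show $\mathcal G$ is a contraction. Decomposing $Y$ along the $\mathbb B$-invariant spectral splitting, writing $DG(x)=\mathbb B+Df(x)$ with $\|Df(G^{j}x)\|\le B\|G^{j}x\|$ which decays geometrically like $d_p^{\,j}$ along orbits (here the $C^{1,1}$ bound is used), and estimating the block $(\iota,\jmath)$ contributions in the adapted norm, where the factor $\|\mathbb B_{\iota}^{-1}\|\sim c_{\iota}^{-1}$ from the target band competes against $\|\mathbb B^{j}\|$ and $d_p^{\,j}$ along the orbit, one finds that the series defining $D\psi$ converges — equivalently $\mathcal G$ contracts — precisely under $d_i/c_i<d_p^{-1}$. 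The fiber contraction theorem applied to $(\varphi,\Theta)\mapsto(\mathcal L\varphi+g,\ \mathcal G\Theta)$ then shows the continuous solution is $C^1$, and the diffeomorphism property follows as above. Everything else — the passage from $F_+$ to $F_+^{-1}$, the $C^0$ theory, and the globalization — is routine.
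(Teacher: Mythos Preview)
Your proposal is correct and follows essentially the same route as the paper: invoke \cite{ZZJ} for the local $C^1$ linearization under the band-ratio condition, then extend the local conjugacy to all of the space by iterating along orbits (the paper does this for the expansion $F_+$ directly, via the fundamental-domain partition $X_i=F_+^{i+1}(U_0)\setminus F_+^i(U_0)$ and $\psi_i=\mathbb A_+\circ\psi_{i-1}\circ F_+^{-1}$, which is exactly your formula $\psi(x)=\mathbb B^{-n}\psi_0(G^n(x))$ read on the inverse map). Your sketch of the internal \cite{ZZJ} spectral estimate is extra detail the paper omits.
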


Having found $\psi_-$ and $\psi_+$
in Lemma \ref{lm-holder-B}, we
finally put
\begin{align*}
\Phi=(\psi_-\circ \pi_-+\psi_+\circ \pi_+)\circ \Psi,\quad
\Phi^{-1}=\Psi^{-1}\circ (\psi_-^{-1}\circ \pi_-+\psi_+^{-1}\circ \pi_+).
\end{align*}
One verifies that $\Phi:X\to X$ is a $C^1$ diffeomorphism that linearizes $F$ and the proof of the theorem is completed. \qquad $\Box$

\begin{proof}[Proof of Lemma~\ref{lm-LPeqn}]
Let
\begin{align*}
\|x\|:=\|x_-\|+\|x_+\|, \quad \|(x,y_-)\|:=\|x\|+\|y_-\|
\end{align*}
for $x=x_-+x_+\in X$ and $y_-\in X_-$. Choose two positive numbers $\gamma_1$ and $\gamma_2$ such that
\begin{align*}
b_k<\gamma_1<1<\gamma_2<a_{k+1} \quad{\rm and}\quad \gamma_1b_r<\gamma_2,
\end{align*}
which is possible because of (\ref{NR22}). Let
$$
E_1:=S_{\gamma_1}(\Omega_d,X)  \ \ \  \mbox{and}\ \ \
E_2:=S_{\gamma_2}(\Omega_d,\mathcal{L}(X\times X_-,X))
$$
for short, where $\mathcal{L}(X\times X_-,X)$ is the set of all bounded linear operators map $X\times X_-$ into $X$.
As mentioned at the beginning of the above proof for the theorem, we understand that $E_1$ and $E_2$ are both Banach spaces equipped the corresponding norms,
denoted by $\|\cdot\|_{E_1}$ and $\|\cdot\|_{E_2}$ respectively.
Define operators $\mathcal{T}: E_1\to E_1$ and $\mathcal{S}: E_1\times E_2\to E_2$ by
\begin{align}
&(\mathcal{T} v)_n(x,y_-):=\mathbb{A}_-^n(y_--\pi_- x)
\nonumber\\
&\qquad+\sum_{k=0}^{n-1}\mathbb{A}_-^{n-k-1}\{\pi_- f(v_k(x,y_-)+F^k(x))-\pi_-
f(F^k(x))\}
\nonumber\\
&\qquad-\sum_{k=n}^{\infty}\mathbb{A}_+^{n-k-1}\{\pi_+ f(v_k(x,y_-)+F^k(x))-\pi_+
f(F^k(x))\}
\label{def-T}
\end{align}
and
\begin{align}
&\mathcal{S}(v,w)_n(x,y_-)
:=\Big({\rm
diag}(0,-\mathbb{A}_-^n),\mathbb{A}_-^n\Big)
\nonumber\\
&\qquad+\sum_{k=0}^{n-1}\mathbb{A}_-^{n-k-1}\{D(\pi_-
f)(v_k(x,y_-)+F^k(x))(w_k(x,y_-)+DF^k(x))
\nonumber\\
&\qquad\hspace{2.8cm}-D(\pi_- f)(F^k(x))DF^k(x)\}
\nonumber\\
&\qquad-\sum_{k=n}^{\infty}\mathbb{A}_+^{n-k-1}\{D(\pi_+
f)(v_k(x,y_-)+F^k(x))(w_k(x,y_-)+DF^k(x))
\nonumber\\
&\qquad\hspace{2.8cm}-D(\pi_+ f)(F^k(x))DF^k(x)\}
\label{def-S}
\end{align}
respectively for all $v:=(v_n)_{n\ge 0}\in E_1$ and all $w:=(w_n)_{n\ge 0}\in
E_2$. We claim the following:
\begin{description}
\item[(A1)]
The operators $\mathcal{T}$ and $\mathcal{S}$ are well defined.
\\
\item[(A2)]
The operator $\mathcal{Q}: E_1\times E_2\to E_1\times E_2$ defined by
\begin{align}
\mathcal{Q}(v,w):=(\mathcal{T}v, \mathcal{S}(v,w)), ~~~~~ \forall (v,w)\in
E_1\times E_2,
\label{def-Q}
\end{align}
has an attracting fixed point $(v_*,w_*)\in E_1\times E_2$, i.e.,
\begin{align}
\lim_{n\to \infty}\mathcal{Q}^n(v,w)=(v_*,w_*),
~~~~~ \forall (v,w)\in E_1\times E_2,
\label{limQ}
\end{align}
where $v_*\in E_1$ is the fixed point of $\mathcal{T}$ and
$w_*\in E_2$ is the fixed point of $\mathcal{S}(v_*,\cdot)$.
\label{lm-fc}
\end{description}

Notice that in \cite{ZZJ} the same claims were proved in neighborhoods having sufficiently small diameters $d>0$.
The only difference in the present version is that we allow the diameter to be arbitrarily large. This causes a little change in the estimates, i.e.,
changing $\|(x,x_-)\|\le 1$ into $\|(x,x_-)\|\le d$. Thus, we obtain the following inequalities from \cite{ZZJ}:
\begin{align*}
&\gamma_1^{-n}\|(\mathcal{T} v)_n(x,y_-)\|\le d+K\eta\|v\|_{E_1},
\\
&\gamma_2^{-n}\|\mathcal{S}(v, w)_n(x,y_-)\|\le 1+K_1\|v\|_{E_1}+K_2\eta\|w\|_{E_2},
\\
&\gamma_1^{-n}\|(\mathcal{T} v)_n(x,y_-)-(\mathcal{T}
\tilde{v})_n(x,y_-)\|\le K\eta\,\|v-\tilde{v}\|_{E_1},
\\
&\gamma_2^{-n}\|\mathcal{S}(v, w)_n(x,y_-)-\mathcal{S}(\tilde{v},
\tilde{w})_n(x,y_-)\|
\\
&\hspace{3.4cm}\le
(K_1\|w\|_{E_2}+K_2)\|v-\tilde{v}\|_{E_1}+K\eta\|w-\tilde{w}\|_{E_2},
\end{align*}
where $K_1,K_2$ and $K$ are positive constants. The first two inequalities indicate that
$\mathcal{T}: E_1\to E_1$ and
$\mathcal{S}:E_1\times E_2\to E_2$ are well defined,
i.e., (A1) holds. The third one means that $\mathcal{T}$ is a contraction
since $\eta>0$ is small and therefore has a fixed point $v_*\in E_1$.
Moreover, setting $v=\tilde{v}=v_*$ in the last inequality, we see that
$\mathcal{S}(v_*,\cdot):E_2\to E_2$ is also a contraction and therefore (A2) is proved by the Fiber Contraction Theorem (see e.g. \cite{HirsPugh-70} or \cite[p.111]{Chicone-book99}).

Having (A1) and (A2), we choose an initial point
$\tilde{v}:=0\in E_1$, whose derivative satisfies $D\tilde{v}=0\in E_2$. Moreover, it is obvious that each
$(\mathcal{T}\tilde{v})_n$ is
$C^1$ due to (\ref{def-T}).
Then, by the definitions (\ref{def-T})-(\ref{def-Q})
 of $\mathcal{T}, \mathcal{S}$ and $\mathcal{Q}$, one checks that
$
\mathcal{Q}(\tilde{v}, D\tilde{v})=(\mathcal{T}\tilde{v}, D(\mathcal{T}\tilde{v})),
$
where $D(\mathcal{T}\tilde{v}):=(D(\mathcal{T}\tilde{v}_n))_{n\ge 0}$.
This enables us to prove inductively that
\begin{align}
\mathcal{Q}^n(\tilde{v}, D\tilde{v})=(\mathcal{T}^n\tilde{v}, D(\mathcal{T}^n\tilde{v})), \quad \forall n\ge 0.
\label{Qnwv}
\end{align}
Combining (\ref{limQ}) with (\ref{Qnwv}) we get
$\lim_{n\to\infty}\mathcal{T}^n\tilde{v}=v_*$ and $\lim_{n\to\infty}D(\mathcal{T}^n\tilde{v})=w_*$,
which implies that $v_*\in E_1$ such that
$dv_*=w_*\in E_2$. Since $v_*$ is the fixed point of $\mathcal{T}$, it
is a solution of the Lyapunov-Perron equation (\ref{eqns-foli}).
Thus, the lemma is proved.
\end{proof}

\begin{proof}[Proof of Lemma~\ref{lm-holder-B}]
According to \cite[Lemma 10]{ZZJ} we know that such $\psi_-$ and $\psi_+$ exist in a small neighborhood $U$ of ${\bf 0}$.
We now extend them into the whole space $X$. In what follows, we only consider the expansion $F_+$
since the case of contraction $F_-$ can be solved by considering its inverse.
Choose a sphere $U_0\in U$ such that
$
U_0\subset {\rm int}\, F_+(U_0)\subset U,
$
where ${\rm int}$ denotes the interior of the set $F_+(U_0)$, and define
$$
X_i:=F_+^{i+1}(U_0)\backslash F_+^{i}(U_0),\quad V_0:=\psi_+(U_0),\quad Y_i:=\mathbb{A}_+^{i+1} (V_0)\backslash \mathbb{A}_+^i(V_0)
$$
for all $i\in \mathbb{N}\cup\{0\}$.
It is clear that
\begin{align*}
X_i\cap X_j=\emptyset, ~~ \forall i\ne j, ~~ X_i\cap U_0=\emptyset, \quad &U_0\cup \bigcup_{i=0}^{\infty}X_i=X,~ ~F_+(X_i)=X_{i+1},
\\
Y_i\cap Y_j=\emptyset,  ~~\forall i\ne j,   ~~Y_i\cap V_0=\emptyset,  \quad &V_0\cup \bigcup_{i=0}^{\infty}Y_i=X,   ~~\mathbb{A}_+(Y_i)=Y_{i+1}.
\end{align*}
Then we define
$$
\psi_0:=\psi_+|_{X_0},\qquad \psi_i:=\mathbb{A}_+\circ \psi_{i-1}\circ F_+^{-1},\quad\forall i\in \mathbb{N},
$$
and define the global solution $\psi_*$ by
$$
\psi_*(x):=
\left\{\begin{array}{lll}
\psi_+(x), & \quad\forall x\in U_0,
\\
\psi_i(x), & \quad\forall x\in X_i, ~~\forall i\in \mathbb{N},
\end{array}\right.
$$
which is $C^1$ in $X$. Moreover, one verifies that $\psi_*(X_i)=Y_i$ for all $i\in \mathbb{N}\cup\{0\}$ because
$$
\psi_*\circ F_+^{i+1}(U_0)= \mathbb{A}_+^{i+1}\circ \psi_*(U_0)= \mathbb{A}_+^{i+1}(V_0),
\quad
\psi_*\circ F_+^{i}(U_0)= \mathbb{A}_+^{i}(V_0).
$$
Thus, $\psi_*:X\to X$ is one-to-one
and therefore it is a $C^1$ diffeomorphism that linearizes $F_+$. Without loss of generality, 
we still use $\psi_+$ to denote $\psi_*$ and
the proof is completed.
\end{proof}

{\bf Acknowledgment:} The authors are ranked in alphabetic order.
The author Davor Dragi\v{c}evi\'c is supported by
an Australian Research Council Discovery Project DP150100017, Croatian Science Foundation under the project IP-2014-09-2285 and by
the University of Rijeka research grant 13.14.1.2.02.
 The author Weinian Zhang
is supported by NSFC grants \#11771307 and \#11521061. The author Wenmeng Zhang is supported by NSFC grant \#11671061.


\bibliographystyle{amsplain}

\enlargethispage{1cm}

\end{document}